\newtheorem{thm}{Theorem}[section]
\newtheorem{prop}[thm]{Proposition}
\newtheorem{cor}[thm]{Corollary}
\newtheorem{lem}[thm]{Lemma}
\theoremstyle{definition}
\newtheorem{dfn}[thm]{Definition}
\newtheorem{ex}[thm]{Example}
\theoremstyle{remark}
\newtheorem{rem}[thm]{Remark}
\newcommand{\mR}{\mathbb{R}}
\newcommand{\mN}{\mathbb{N}}
\newcommand{\mC}{\mathbb{C}}
\newcommand{\mT}{\mathbb{T}}
\newcommand{\mRP}{\mathbb{RP}}
\newcommand{\mCP}{\mathbb{CP}}
\newcommand{\mHP}{\mathbb{HP}}
\newcommand{\mOP}{\mathbb{OP}}
\newcommand{\whH}{\widehat{H}}
\newcommand{\whM}{\widehat{M}}
\newcommand{\whU}{\widehat{U}}
\newcommand{\whV}{\widehat{V}}
\newcommand{\whpi}{\widehat{\pi}}
\newcommand{\cH}{\ensuremath{\mathcal{H}}}
\newcommand{\cat}{\ensuremath{\mathrm{cat} }}
\newcommand{\Fix}{\operatorname{Fix}}
\newcommand{\intern}{\operatorname{int}}
\begin{document}

\title{The equivariant LS-category of polar actions}
\author{Steven Hurder}
\address{Steven Hurder, Department of Mathematics, University of Illinois at Chicago, USA}
\email{hurder@uic.edu}
\author{Dirk T\"oben}
\address{Dirk T\"oben, Mathematisches Institut, Universit\"at zu K\"oln, Weyertal 86-90, 50931 K\"oln, Germany}
\email{dtoeben@math.uni-koeln.de}
\date{February 20, 2007}
\thanks{SH was supported in part by NSF grant DMS-0406254.}
\thanks{DT was supported by the Schwerpunktprogramm SPP 1154 of the DFG}

\keywords{Lusternik-Schnirelmann category, equivariant category, polar actions, finite group actions, foliations}

\begin{abstract}
We will provide a lower bound for arbitrary proper actions in terms of the stratification by orbit types, and an upper bound for proper polar actions in terms of the equivariant LS-category of its generalized Weyl group. As an application we reprove a theorem of Singhof that determines the classical Lusternik-Schnirelmann category for $U(n)$ and $SU(n)$.
\end{abstract}
\maketitle
\section{Introduction}
The equivariant Lusternik-Schnirelmann category $\cat_G(M)$ of an action by a Lie group $G$ on a manifold $M$ (see Definition \ref{def:equivcat}) was introduced by Marzantowicz in \cite{marzantowicz} for compact $G$, as a generalization the classical Lusternik-Schnirelmann category of a space \cite{James2}.
He showed that similar to the classical case, $\cat_G(M)$ is a lower bound for the number of critical orbits of a $G$-invariant function on $M$ and that  it has a lower bound in terms of the cuplength of a $G$-cohomology theory. 

 Colman studied the equivariant  category $\cat_G(M)$ for $G$ a finite group in \cite{colman}, and gave an upper bound in terms of the category of the connected components of the singular set for the action. 
 Moreover, her work gives examples    of finite group actions on compact surfaces for which   $\cat_G(M)$ can be made arbitrarily large \cite{colman}, showing the necessity of working with the connected components. 
 Note that for finite group actions, the singular set consists entirely of exceptional points.
 
Ayala, Lasheras and Quintero  \cite{ayala} generalize the Marzantowicz results  to proper group actions, although   finite group actions were still their primary consideration. 

In   this paper,  we will focus   on the equivariant category of proper actions by higher dimensional Lie groups. 
 In sections \ref{sec:catbounds} and \ref{sec:lower} we will introduce a refinement of the stratification by orbit types and provide a lower bound for arbitrary proper actions in terms of its bottom stratum. Section~\ref{sec:polar} defines the class of polar actions from the title, and section~\ref{sec:weyl} introduces  the Weyl group associated to a polar action. Section~\ref{sec:upper} contains our main result; for proper polar actions we will give an upper bound in terms of the equivariant category of its generalized Weyl group of a polar action, thereby reducing the computation to the discrete case. In section \ref{sec:examples} we will use the previous results to calculate the equivariant category of $SU(n)$ and $U(n)$. We then observe that this also determines the classical Lusternik-Schnirelmann category of these spaces, which is a theorem by Singhof. 

This work was begun while the second author was visiting the University of Illinois at Chicago for the academic year 2005-2006, and completed while the first author was a guest at the Institut Henri Poincar\'{e} in Paris. The authors would like to express their thanks for the hospitality shown by both host institutions.

\section{Categorical bounds for proper actions}\label{sec:catbounds}

Let $G$ be a topological group acting on a topological space $M$; in most of our cases this will be a Lie group acting smoothly on a manifold. A homotopy $H:U\times [0,1]\to M$ of an open $G$-invariant set $U\subset M$ is called $G$-{\it equivariant} or just $G$-{\it homotopy} if $gH(x,t)=H(gx,t)$ for any $g\in G, x\in U$ and $t\in [0,1]$. We also write $x_t=H_t(x)=H(x,t)$. The set $U$ is $G\mbox{-}${\it categorical}, if $H_0$ is the identity and if $H_1$ maps $U$  to a single orbit. 

\begin{dfn}\label{def:equivcat}
The {\it equivariant category} $\cat_G(M)$ is the least number of $G$-categorical sets required to cover $M$. If there is no categorical cover of $M$ we set $\cat_G(M)=\infty$. 
\end{dfn}

\begin{rem}
For the trivial group $G=\{e\}$ we recover the classical Lusternik-Schnirelmann category, denoted by $\cat(M)$. If $M$ is empty,   $\cat_G(M) = 0$. 
\end{rem}

Recall that an action is \emph{proper}  if for each compact subset $K \subset M$,  $\{(g,x) \mid gx \in K\}$ is a compact subset of $G \times M$.
For a proper action,   the orbits $Gx$ are closed and embedded submanifolds, hence 
the quotient $M/G$ is a Hausdorff space. Then an obvious lower bound for $\cat_G(M)$ is $\cat(M/G)\leq \cat_G(M)$. 

From now on we assume that $G$ is a Lie group acting properly on a manifold $M$. 

An important tool for studying the equivariant category is provided by the following well-known theorem 
\cite{Bredon1972, duistermaat,palais}. 

\begin{thm}[Tubular Neighborhood Theorem]
Let $G$ act properly on $M$. Then for any orbit $Gx$ there is an invariant neighborhood $U$ and a $G$-equivariant homotopy $H:U\times[0,1]\to M$ with $H_1(U)=Gx$ and $H_t|Gx$ is the inclusion of $Gx$ for all $t\in[0,1]$.
\end{thm}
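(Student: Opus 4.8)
The plan is to deduce this statement from the slice theorem for proper actions, and then to exhibit the homotopy explicitly using the linear structure of the slice. First I would recall the two standard facts that underlie the references cited: for a proper action every isotropy group $G_x$ is compact, and $M$ admits a $G$-invariant Riemannian metric (built by averaging an arbitrary metric against $G$ using a cutoff function supplied by properness, i.e.\ a partition of unity argument on $M/G$). Fixing such a metric, the normal exponential map, restricted to a sufficiently small open disc subbundle of the normal bundle $\nu(Gx)$, is a diffeomorphism onto an open $G$-invariant neighborhood $U$ of $Gx$; invariance of the metric makes this map $G$-equivariant, so it identifies $U$ equivariantly with that disc bundle and carries $Gx$ onto the zero section. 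Since $Gx$ may be noncompact, properness is used once more to guarantee a uniform injectivity radius along the orbit.

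Next I would pass to the slice picture. The normal space $V = T_xM/T_x(Gx)$ carries the linear slice representation of the compact group $G_x$, and the disc bundle above is $G$-equivariantly diffeomorphic to the associated bundle $G\times_{G_x} D(V)$, whose points are classes $[g,v]$ subject to $[gh,v]=[g,hv]$ for $h\in G_x$, with $G$ acting by $g'\cdot[g,v]=[g'g,v]$; under this identification $Gx$ corresponds to $G\times_{G_x}\{0\}$.

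Then the homotopy is immediate: set $H([g,v],t) = [g,(1-t)v]$. Scalar multiplication on $V$ commutes with the linear $G_x$-action, so $H$ is well defined on the associated bundle; it is $G$-equivariant for the left action; $H_0=\id$; $H_1$ maps $U$ into $G\times_{G_x}\{0\}=Gx$; and since $(1-t)\cdot 0=0$ for all $t$, the zero section is fixed pointwise throughout, which is exactly the requirement that $H_t|Gx$ be the inclusion. Transporting $H$ back through the exponential diffeomorphism yields the stated homotopy on the tube $U$.

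The main obstacle is entirely the first step — the existence of the invariant metric and the proof that the normal exponential map is an equivariant diffeomorphism onto an invariant tubular neighborhood, i.e.\ the slice theorem itself, which is precisely the content of \cite{Bredon1972, duistermaat, palais}. Granting that, the homotopy and all of its asserted properties follow formally from the linear structure on the slice, with no further analytic input.
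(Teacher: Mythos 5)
Your proposal is correct and follows essentially the same route as the paper: both average to get a $G$-invariant Riemannian metric, use the $G$-equivariant normal exponential map to identify an invariant tube around $Gx$ with a disc bundle over the orbit, and then retract radially along the fibers. Your extra step of rewriting the tube as the associated bundle $G\times_{G_x}D(V)$ and setting $H([g,v],t)=[g,(1-t)v]$ is just a repackaging of the geodesic retraction the paper writes directly as $\exp$ of a scaled normal vector (note the paper's printed formula $\exp(t\vec v)$ has the time parameter reversed relative to the normalization $H_1(U)=Gx$; your $(1-t)$ is the intended one), so there is no substantive difference between the two arguments.
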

We will give an outline of the proof in the case that the action is smooth, thereby introducing principles that will be useful later. 
\begin{proof}
Choose  a Riemannian metric on $M$ for which the $G$-action  is isometric.  For $x \in M$, the orbit $Gx \subset M$ is a properly embedded smooth submanifold. 
Let  $\nu (Gx) \to Gx$ be the normal bundle to $Gx$ of $M$, and  let $\nu^r (Gx) \subset  \nu (Gx)$ denote the disk subbundle of vectors of length at most $r$. Then 
there exists $r > 0$ such  that the geodesic exponential map  $\exp : \nu^r(Gx) \to M$   is a diffeomorphism onto a  tubular neighborhood $U$  of the orbit $Gx$. 
Define the    geodesic  retraction $H:U\times[0,1]\to M$ onto $Gx$   by $H_t(\exp(\vec{v})) = H(\exp(\vec{v}), t) = \exp(t \vec{v})$.   It is easy to see that the homotopy $H$ is $G$-equivariant. 
\end{proof}

Now define the slice $S=H_1^{-1}(x)$, where $H:U\times [0,1]\to M$ is as in the proof. Let $G_x = \{g \in G \mid gx = x\}$ denote the stabilizer of $x$.  Then there is a $G$-equivariant diffeomorphism 
$$
U\cong G\times_{G_x}S.
$$
\begin{ex}
Let $G$ be a Lie group acting properly on an Hadamard manifold $X$, and assume the action is polar   (see Definition \ref{def:polar}). Let $K$ be a maximal compact subgroup of $G$. One can show $K=G_x$ for some $x\in X$. In \cite{toeben3} the second author proved that the normal exponential map $\exp^\perp:\nu (Gx) \to X$ of $Gx$ is a $G$-equivariant diffeomorphism. In other words, $X$ is a global tubular neighborhood of $Gx$. Therefore $\cat_G(X)=1$.
\end{ex}

The Tubular Neighborhood Theorem directly implies:
\begin{cor}
Let $G$ be a compact Lie group acting on a compact manifold $M$. Then $\cat_G(M)<\infty$.
\end{cor}

 The Tubular Neighborhood Theorem is generalized by the following theorem that is proven by topological methods.
\begin{thm}[Equivariant Borsuk Theorem]
Let $G$ act properly on $M$. Then any closed invariant subset $A$ has an invariant neighborhood $U$ for which $A$ is a strong $G$-deformation retract, i.e. there is $G$-homotopy $H:U\times[0,1]\to M$ with $H_1(U)=A$ and $H_t|A$ is the inclusion of $A$ into $M$ for all $t\in[0,1]$.
\end{thm}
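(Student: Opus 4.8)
The plan is to carry over the classical proof of Borsuk's homotopy extension theorem to the equivariant category, the one nontrivial input being that a manifold equipped with a proper smooth $G$-action is an equivariant ANR. First I would fix a complete $G$-invariant Riemannian metric on $M$, which exists by properness; in particular $M$ is a metrizable $G$-space. I would then record the structural facts from equivariant ANR theory: using the slice description $U\cong G\times_{G_x}S$ from the Tubular Neighborhood Theorem together with the compactness of the isotropy groups $G_x$, each orbit has an invariant neighborhood of the form $G\times_H L$ with $H$ compact and $L$ a linear $H$-representation, and such spaces are $G$-ANRs; since the $G$-ANR property is local for metrizable $G$-spaces with this kind of orbit structure, $M$ is a $G$-ANR, and hence so is every open invariant subset of $M$. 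Now let $A\subseteq M$ be closed and invariant and, as is the case for the submanifolds and orbit-type strata that occur in the applications, a $G$-ANR in its own right. Applying the defining neighborhood-extension property of $G$-ANRs to the identity map $\id_A$ produces an invariant open set $V\supseteq A$ and a $G$-equivariant retraction $r\colon V\to A$.

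It remains to promote $r$ to a strong deformation retraction. Consider the closed invariant subset $K=(V\times\{0\})\cup(A\times[0,1])\cup(V\times\{1\})$ of $V\times[0,1]$ and the $G$-map $H^{0}\colon K\to V$ given by $H^{0}(x,0)=x$, $H^{0}(x,1)=r(x)$ and $H^{0}(a,t)=a$ for $a\in A$; this is well defined, continuous and equivariant. Since $V$ is a $G$-ANR, $H^{0}$ extends to a $G$-map $H\colon W\to V$ over some invariant open neighborhood $W$ of $K$ in $V\times[0,1]$. A tube-lemma argument then gives an invariant open set $U$ with $A\subseteq U$ and $U\times[0,1]\subseteq W$: for each $a\in A$ compactness of $[0,1]$ yields an open product $U_a\times[0,1]\subseteq W$, one takes the union over $a\in A$, and finally saturates it under $G$, which keeps it inside the invariant set $W$. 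Restricting $H$ to $U\times[0,1]$ gives a $G$-homotopy into $V\subseteq M$ with $H_0=\id_U$, with $H_1=r|_U$ so that $H_1(U)=r(U)=A$, and with $H_t|_A=\id_A$ for all $t$; this is the required strong $G$-deformation retraction.

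The hard part is not this formal argument but the inputs behind it: that proper $G$-manifolds and the relevant closed invariant subsets are $G$-ANRs, and that $G$-ANRs satisfy the equivariant neighborhood-extension property invoked twice above. For $M$ this means passing carefully from the local slice models $G\times_H L$ to the global statement — using an equivariant ANR criterion for homogeneous vector bundles of this form and the locality of the $G$-ANR property — rather than any further differential geometry. I should note that when $A$ is in fact a closed invariant submanifold one can avoid equivariant ANR theory entirely and simply imitate the proof of the Tubular Neighborhood Theorem, taking $U$ to be a tube around $A$ cut out by an invariant radius function and $H$ the geodesic retraction along the normal exponential map of $A$.
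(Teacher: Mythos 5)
The paper does not actually prove this theorem; it defers entirely to Proposition~3.5 of \cite{ayala} (and to Borsuk's classical work), and your argument is a reasonable reconstruction of the equivariant-ANR proof that that reference supplies. The two-step structure — first producing an invariant neighborhood $V$ and a $G$-retraction $r\colon V\to A$ from the defining property of $G$-ANRs, then promoting $r$ to a strong $G$-deformation retraction by extending the partial homotopy on $(V\times\{0\})\cup(A\times[0,1])\cup(V\times\{1\})$ and shrinking to a saturated product neighborhood via the tube lemma — is the standard route, and the structural inputs you isolate (an invariant complete metric, the slice models $G\times_H L$ with $H$ compact, locality of the $G$-ANR property) are the right ones.

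The one substantive issue, which you are right to flag, is that the theorem as literally stated (``any closed invariant subset $A$'') is false without a further hypothesis: a $G$-invariant Cantor set in $M$ is not a neighborhood retract, let alone a strong deformation retract. Your proof correctly adds the hypothesis that $A$ be a $G$-ANR, which is what the cited Proposition~3.5 assumes. This is harmless for the paper's purposes, since every $A$ to which the result is applied — the singular stratum $S$, the exceptional set $E$, and the closures $\overline{\mathcal M}_x$ of the orbit-type components — is a closed $G$-invariant union of strata of the Whitney stratification by orbit type, hence locally $G$-contractible and a $G$-ANR. Your alternative route for the case of a closed invariant submanifold (an invariant tube with the normal geodesic retraction, exactly as in the Tubular Neighborhood Theorem) is correct but does not cover the cases that actually matter here: $S$ and $\overline{\mathcal M}_x$ need not be submanifolds, so the equivariant ANR argument is the one required.
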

\begin{proof}
This is stated in more generality as Proposition 3.5. in \cite{ayala}. See \cite{borsuk} for the classical version.
\end{proof}

An orbit $Gx$ of maximal dimension is called {\it regular},  and each    point $y \in Gx$ is said to be regular.   Let $r$ be the dimension of a regular orbit.  The \emph{cohomogeneity of the action} is defined to be the codimension $q$ of a regular orbit. An orbit with dimension less than $r$ is said to be \emph{singular}.  The set of regular (respectively, singular) points is denoted by $R$ (respectively,  $S$).  The union of the regular orbits $R$ forms an open dense connected subset of $M$. 

A regular orbit $Gx$ is said to have \emph{non-trivial holonomy} if there exists $y \in Gx$ arbitrarily close to $x$ such that the orbit $Gy$ is a non-trivial covering of $Gx$; such an orbit is said to be \emph{exceptional}. Let $E$ denote the  union of the exceptional  orbits, and $R_0 = R - E$ the regular orbits without holonomy. 
 The quotient space $B := R_0/G$ is a connected  manifold of dimension $q$, which is compact if $R_0 = M$, and open otherwise. The quotient map $\rho \colon R_0 \to B$ is then a right $G$-fiber bundle.

\begin{thm} Let $G \times M \to M$ be a proper smooth action of a Lie group $G$. If either $S$ or $E$ is non-empty, then 
\begin{equation}\label{eq1a}
\cat_G(M) \leq \cat_G(S)+\cat_G(R)\leq \cat_G(S)+\cat_G(E)+q
\end{equation}
Otherwise, if all orbits are regular and there are no exceptional orbits, then 
\begin{equation}\label{eq1b}
\cat_G(M) \leq q +1
\end{equation}
\end{thm}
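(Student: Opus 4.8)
The plan is to bound $\cat_G(M)$ by covering $M$ with $G$-categorical sets built from the two natural pieces of the orbit stratification: the singular/exceptional part and the regular part.

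First I would handle the singular set $S$ (and, analogously, the exceptional set $E$). Since the action is proper and smooth, $S$ is a closed invariant subset of $M$, so by the Equivariant Borsuk Theorem it admits an invariant neighborhood $U_S$ that strong $G$-deformation retracts onto $S$. Pulling back a minimal $G$-categorical cover of $S$ under this retraction and intersecting with $U_S$, one obtains a $G$-categorical cover of $U_S$ by $\cat_G(S)$ open invariant sets; this is the standard argument that $\cat_G(U_S) \le \cat_G(S)$ for a neighborhood deformation retract. The same reasoning applied to $E$ inside $R$ (note $E$ is closed in the manifold $R$) gives an invariant neighborhood of $E$ in $R$ covered by $\cat_G(E)$ categorical sets.

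Next I would cover the remaining regular part $R_0 = R - E$. Here the key structural fact from the excerpt is that $\rho\colon R_0 \to B$ is a right $G$-fiber bundle over the $q$-dimensional manifold $B = R_0/G$. A standard covering argument gives $\cat(B) \le q+1$ (since $B$ is a manifold of dimension $q$, hence covered by $q+1$ Euclidean — in particular categorical — open sets, by the usual CW/triangulation estimate for LS-category). Over each such Euclidean chart $V \subset B$ the bundle $\rho$ is trivial, $\rho^{-1}(V) \cong V \times G$ as a $G$-space with $G$ acting on the right factor; contracting $V$ to a point then deforms $\rho^{-1}(V)$ equivariantly onto a single orbit, so each $\rho^{-1}(V)$ is $G$-categorical. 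This yields $\cat_G(R_0) \le q+1$, and combined with the neighborhood of $E$ it gives $\cat_G(R) \le \cat_G(E) + q + 1$; a small refinement (merging one categorical set of the $E$-neighborhood with the bundle cover, or observing the cover of $B$ can be taken compatibly) sharpens this to $\cat_G(R) \le \cat_G(E) + q$ when $E \ne \emptyset$. Assembling $U_S$ with a cover of $R$ (using that $\{U_S, R\}$ or rather $\{U_S\} \cup (\text{cover of } R)$ covers $M$ since $S \subset U_S$ and $M = S \cup R$) gives \eqref{eq1a}; when $S = E = \emptyset$ all of $M = R_0$ and the bundle argument alone gives \eqref{eq1b}.

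The main obstacle I anticipate is the bookkeeping that produces the \emph{additive} bounds with the sharp constant $q$ rather than $q+1$ in \eqref{eq1a}: one must be careful that the open cover of $M$ genuinely consists of $G$-invariant categorical sets and that the counts do not secretly overlap or double-count the interface between strata. Concretely, the subtle point is arranging the $q+1$ (or $q$) categorical sets over $B$ so that they restrict compatibly to the boundary collar where $R$ meets the neighborhood of $S$ or $E$, and verifying that the deformation retractions can be chosen $G$-equivariantly and glued without increasing the count — this is where properness of the action and the invariance in the Borsuk and Tubular Neighborhood Theorems are doing the real work. The fiber-bundle-to-categorical step and the manifold estimate $\cat(B) \le \dim B + 1$ are routine; the delicate part is the union estimate.
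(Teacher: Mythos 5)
Your overall decomposition is the same as the paper's: handle $S$ and $E$ via the Equivariant Borsuk Theorem, and cover the principal stratum $R_0$ by exploiting the bundle $\rho\colon R_0\to B$. The paper lifts contractions of charts in $B$ horizontally through the Riemannian submersion rather than invoking local triviality, but these are interchangeable for producing $G$-categorical sets over contractible pieces of $B$. (Small slip on the way: $\rho^{-1}(V)\cong V\times G/H$ with $H$ a principal isotropy group, not $V\times G$ — the action need not be free — but this does not affect the argument.)

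Where the proposal has a genuine gap is in the last step, the passage from $\cat_G(R)\le \cat_G(E)+q+1$ to the sharp bound $\cat_G(R)\le \cat_G(E)+q$ claimed in \eqref{eq1a}. You suggest this follows from ``merging one categorical set of the $E$-neighborhood with the bundle cover'' or ``observing the cover of $B$ can be taken compatibly,'' but neither of these is an argument: there is no reason a Borsuk-neighborhood chart over $E$ and a trivialization chart over $B$ can be fused into a single $G$-categorical set, and ``compatibility'' of the covers is not what is needed. The actual reason the constant drops by one is that when $S$ or $E$ is non-empty, $R_0\ne M$, so $B=R_0/G$ is a connected \emph{non-compact} manifold of dimension $q$, and a non-compact $q$-manifold has LS-category at most $q$ (it is homotopy equivalent to a CW-complex of dimension $\le q-1$). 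Thus one already gets $\cat_G(R_0)\le\cat(B)\le q$ with no refinement needed. Correspondingly, the $q+1$ in \eqref{eq1b} is not a weakening that could be improved: it is exactly the case $S=E=\emptyset$, where $B=M/G$ is compact and one only has $\cat(B)\le q+1$. Your worry about ``restricting compatibly to the boundary collar'' is a red herring — the covers of the Borsuk neighborhoods and of $R_0$ just get taken together, with no gluing required; the real content is the compactness/non-compactness dichotomy for $B$.
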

\begin{proof}

 Let us first show that 
\begin{equation}\label{eq2}
\cat_G(M) \leq \cat_G(S)+\cat_G(R)
\end{equation}
Assume that $S$ is not empty, then by the Equivariant Borsuk Theorem we can extend a $G$-categorical cover for $S$ to a $G$-categorical cover of some invariant neighborhood $U$ of $S$ with the same cardinality. Together with an equivariant cover for $R$, we obtain a $G$-categorical cover for $M$. This proves (\ref{eq2}). 

Assume that $E$ is not empty. We  use   ideas from \cite{colmanhurder,CM2001,hurdertoeben}  to show
\begin{equation}\label{eq3}
\cat_G(R) \leq \cat_G(E)+q
\end{equation}
 Endow $M$ with a $G$-invariant Riemannian metric, then  the projection $\rho$   becomes a Riemannian submersion for an appropriate metric on the open manifold  $B$. Let $\cH \subset TR_0$ denote the orthogonal bundle to the orbits of $G$, so that $\cH$ is $G$-invariant, and is the horizontal distribution for $\rho$  in the sense of Riemannian submersions.

 Given an open set $U\subset B$, set $\whU = \rho^{-1}(U)$. Given a $C^1$-contraction $h:U\times I\to B$  to a point $b_0 \in B$, we define a $G$-equivariant lift $H : \whU  \times I \to R_0$ by requiring that for $x \in   \whU$, 
\begin{equation}\label{eq:horlift}
H_0(x) = x ~, \quad d\rho \left( \frac{d}{dt} H_t(x)  \right) = \frac{d}{dt} h_t(\rho(x)) ~, \quad \frac{d}{dt} H_t(x) \in \cH
\end{equation}
The differential conditions (\ref{eq:horlift}) mean that $H_t(x)$ is the horizontal curve over $h_t(\rho(x))$, and the $G$-invariance of $\cH$ implies that $H_t$ is a $G$-equivariant map for all $t$. As $H_1$ maps $\whU$ into the $G$ orbit over $b_0$,   $\whU$ is a $G$-categorical set in $R_0$.

Now note that $B$ is a connected open manifold of dimension $q$, so there exists a categorical covering   
$\{U_1, \ldots , U_k\}$ with smooth homotopies, for some  $k \leq q$. Their inverse images $\{\whU_1, \ldots , \whU_k\}$ form a $G$-categorical covering for $R_0$.

 It remains to note that   by the Equivariant Borsuk Theorem, we can extend a $G$-categorical cover for $E$ to a $G$-categorical cover of some invariant neighborhood $U$ of $E$ with the same cardinality.
 
 Finally, in the case where $M = R$ and there are no exceptional orbits, then we modify the above proof only with the remark,  that if $M$ is compact, then $M/G$ is compact, hence admits a categorical covering with at most $q+1$ open sets. \end{proof}

This result can be iterated if applied to the singular stratum $S$, using the more general equivariant version of Borsuk's theorem.

\section{Lower bound estimates}\label{sec:lower}

Our next aim is to give two lower bounds for $\cat_G(M)$ which are fundamental in applications. Note that each orbit $Gx$ is a $G$-subspace.

\begin{dfn}
A $G$-{\it path} from an orbit $Gx$ to an orbit $Gy$ is a $G$-equivariant map map $I:Gx\times [0,1]\to M$ such that 
\begin{enumerate}
\item $I_0$ is the inclusion of $Gx$ in $M$,
\item   $I_1(Gx)=Gy$.
\end{enumerate}
 If, in addition, every $I_t$ is a diffeomorphism then we call $I$ a $G$-{\it isotopy}.
 \end{dfn}
Recall that given a $G$-invariant subset $X \subset M$,  a $G$-homotopy is a continuous family of $G$-maps 
$H_t : X \times [0,1] \to M$. For each $x \in X$ we then obtain a $G$-path $H_t : Gx \times [0,1] \to M$ from $Gx$ to $Gy$ where $y = H_1(x)$.

We will now recall some properties of proper actions. Given    a closed subgroup   $H \subset G$, we denote by $(H)$ the conjugacy class of $H$ in $G$. While for an orbit $Gx$ the isotropy group $G_y$ depends on the choice of $y\in Gx$, the conjugacy class $(G_x)$ does not and is therefore an invariant of $Gx$. The class $(G_x)$ is called the {\it orbit type} of $Gx$. There is a partial order on the set of orbit types of the $G$-space $M$: given   isotropy groups $H,K\subset G$, 
\begin{equation}\label{eq5}
(H)\leq (K)\quad \mbox{if}\ gKg^{-1}\subset H\ \mbox{for some}\ g\in G.
\end{equation}

An orbit $Gx$ and its orbit type $(G_x)$ are called {\it principal} if $Gx$   has a $G$-invariant open neighborhood that contains no orbit of larger orbit type. The union of principal orbit types is open and dense in $M$. If $M$ is connected, then the space of principal orbits  is connected,  and hence there is exactly one principal orbit type; this orbit type is comparable to any other orbit type and it is the maximum with respect to the partial order in (\ref{eq5}).
The orbit $Gx$ and its orbit type $(G_x)$ are called {\it minimal} if $(G_x)$ is a minimum with respect to this partial order. 

\begin{lem} 
Let $I:Gx\times [0,1]\to M$ be a $G$-path in $M$, and  write $x_t=I_t(x)$. 
Then for all $0 \leq t \leq 1$,
\begin{equation}\label{equ1}
G_x\subset G_{x_t}\quad \mbox{and therefore}\quad (G_{x_t})\leq (G_x), ~ \mbox{hence} ~ \dim Gx_t\leq \dim Gx.
\end{equation}
\end{lem}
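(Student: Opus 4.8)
The plan is to exploit the $G$-equivariance of $I$ directly. First I would fix an element $g \in G_x$, so that $gx = x$ by definition of the stabilizer, and then compute, for each fixed $t \in [0,1]$,
$$
g\,x_t = g\,I_t(x) = I_t(gx) = I_t(x) = x_t,
$$
where the middle equality is precisely the $G$-equivariance of the map $I_t$. Hence $g \in G_{x_t}$, and since $g \in G_x$ was arbitrary this gives the inclusion $G_x \subset G_{x_t}$ for all $t$.

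Next I would observe that the inclusion $G_x \subset G_{x_t}$ is in particular a conjugation of $G_x$ into $G_{x_t}$ by the identity element $e \in G$; comparing with the definition of the partial order in (\ref{eq5}), this is exactly the statement $(G_{x_t}) \leq (G_x)$.

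For the dimension inequality I would use that the action is proper: then the stabilizers $G_x$ and $G_{x_t}$ are closed subgroups of $G$, hence embedded Lie subgroups, and the orbits $Gx \cong G/G_x$ and $Gx_t \cong G/G_{x_t}$ are embedded submanifolds. The containment of Lie groups $G_x \subset G_{x_t}$ forces $\dim G_x \leq \dim G_{x_t}$, and therefore
$$
\dim Gx = \dim G - \dim G_x \;\geq\; \dim G - \dim G_{x_t} = \dim Gx_t.
$$

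There is essentially no genuine obstacle in this argument; the only points that need a moment of care are keeping the direction of the partial order in (\ref{eq5}) consistent (a larger stabilizer corresponds to a smaller orbit type and hence a lower-dimensional orbit), and invoking properness at the end so that the stabilizers are honest Lie subgroups and the dimension bookkeeping via $G/G_x$ is legitimate.
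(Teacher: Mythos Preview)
Your argument is correct and is exactly the one-line computation the paper gives: for $g\in G_x$, $gI(t,x)=I(t,gx)=I(t,x)$. Your additional remarks on the partial order and the dimension count just spell out what the paper leaves implicit after that line.
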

\begin{proof}
For $g\in G_x$ we have $gI(t,x)=I(t,gx)=I(t,x)$.
\end{proof} 
 The second property of (\ref{equ1}) means that $I_t$ respects the partial order of orbit types. It follows that a minimal orbit type is preserved under a $G$-path, i.e. $(G_{x_t})=(G_x)$. We will generalize this principle in the next paragraphs.

For an isotropy group $H$, define the $(H)$-{\it orbit type submanifold}
\begin{equation}
M_{(H)} =\{x\in M\mid G_x\in (H)\}
\end{equation}
which is the union of orbits of the same orbit type $(H)$.  One knows that $M_{(H)}$ is a submanifold, possibly open, and the quotient map  $M_{(H)} \to M_{(H)}/G$ is a fiber bundle when restricted to connected component of $M_{(H)}$.
Also, define the invariant set
\begin{equation}
M_{\leq(H)}=\{x\in M\mid (G_x)\leq (H)\}=G\cdot\Fix(H)
\end{equation}
which  is closed by the Tubular Neighborhood Theorem, but in general need  not be    a submanifold.

Let $H \subset G$ be an isotropy subgroup, and suppose that  $Gx\subset M_{\leq(H)}$ 
(respectively,  $M_{\leq(H)}\cap U\neq\emptyset$). Then   
 equation (\ref{equ1})   implies
\begin{equation}\label{equ2}
I_t(Gx)\subset M_{\leq(H)}\quad \mbox{respectively,}\quad H_t(M_{\leq(H)}\cap U)\subset M_{\leq(H)}.
\end{equation}
Hence
\begin{equation}\label{equ3}
\cat_G(M_{\leq (G_x)})\leq\cat_G(M).
\end{equation}
This proves again that each minimal orbit type is preserved under G-homotopy.

Note that a $G$-homotopy $H_t$ preserves the connected components of $M_{\leq(H)}$. This motivates the introduction of the following: for $x\in M$ we define $\mathcal M_x=G\cdot(M_{(G_x)})_x$, the $G$-orbit of the connected component of $M_{(G_x)}$ containing $x$. Let us call a union of connected components of an invariant set of $M$ a $G$-{\it component} if it is itself invariant. Then  $G\cdot(M_{(G_x)})_x$ is the smallest $G$-component of $M_{(G_x)}$ containing $x$. It is not difficult to show that $\mathcal M_x$ is the union of orbits that can be reached from $Gx$ by a $G$-isotopy. The decomposition $\mathfrak M'$ of $M$ into connected components of orbit type submanifolds $M_{(H)}$ is a Whitney stratification (see e.g. \cite{duistermaat}). The decomposition
$$
\mathfrak M=\{\mathcal M_x\mid x\in M\}.
$$
is a coarser Whitney stratification; the $G$-orbit of one element in $\mathfrak M'$ constitutes one element in $\mathfrak M$. Both stratifications induce the same Whitney stratification of $M/G$. The {\it incidence relations} on $\mathfrak M$ are defined by the partial order 
\begin{equation}\label{defn:ordering}
\mathcal M_y\preceq \mathcal M_x :\Longleftrightarrow  \mathcal M_y\subset \overline{\mathcal M}_x
\end{equation}
which is related to the already introduced partial order on $\mathfrak M'$ by
$$
\mathcal M_y\preceq \mathcal M_x \Longrightarrow (G_y)\leq (G_x), \mbox{i.e.}\ M_{(G_y)}\subset M_{\leq(G_x)}.
$$

The following is a property of a stratification.
\begin{lem}\label{lemma:boundary}
 ${\mathcal M}_y\prec  {\mathcal M}_x \iff y\in\overline{\mathcal{M}}_x\backslash \mathcal M_x\iff \mathcal M_y\subset\overline{\mathcal{M}}_x\backslash \mathcal M_x$.
\end{lem}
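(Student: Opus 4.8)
The plan is to deduce the lemma from two structural facts about the decomposition $\mathfrak{M}$ that were already recorded above: that its pieces are pairwise disjoint (they form a decomposition of $M$), and that, being a stratification, it satisfies the \emph{condition of the frontier}: if one piece meets the closure of another, it is contained in that closure. Equivalently, $\overline{\mathcal{M}}_x$ is a union of elements of $\mathfrak{M}$. Once these are in hand, the proof is purely formal, and reduces to chaining together two elementary equivalences.

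First I would establish $\mathcal{M}_y \prec \mathcal{M}_x \iff y \in \overline{\mathcal{M}}_x \setminus \mathcal{M}_x$. For ``$\Rightarrow$'', from $\mathcal{M}_y \preceq \mathcal{M}_x$ we get $y \in \mathcal{M}_y \subset \overline{\mathcal{M}}_x$; since $\mathcal{M}_y \neq \mathcal{M}_x$ and distinct pieces of $\mathfrak{M}$ are disjoint, $y \notin \mathcal{M}_x$, so $y \in \overline{\mathcal{M}}_x \setminus \mathcal{M}_x$. For ``$\Leftarrow$'', $y \in \overline{\mathcal{M}}_x$ means $\mathcal{M}_y \cap \overline{\mathcal{M}}_x \neq \emptyset$, so the frontier condition gives $\mathcal{M}_y \subset \overline{\mathcal{M}}_x$, i.e.\ $\mathcal{M}_y \preceq \mathcal{M}_x$; and $y \in \mathcal{M}_y$ while $y \notin \mathcal{M}_x$ forces $\mathcal{M}_y \neq \mathcal{M}_x$, hence $\mathcal{M}_y \prec \mathcal{M}_x$.

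Next I would prove $y \in \overline{\mathcal{M}}_x \setminus \mathcal{M}_x \iff \mathcal{M}_y \subset \overline{\mathcal{M}}_x \setminus \mathcal{M}_x$. The implication ``$\Leftarrow$'' is immediate since $y \in \mathcal{M}_y$. Conversely, the previous equivalence already yields $\mathcal{M}_y \preceq \mathcal{M}_x$, hence $\mathcal{M}_y \subset \overline{\mathcal{M}}_x$; and since $\mathcal{M}_y$ and $\mathcal{M}_x$ are distinct, hence disjoint, pieces of $\mathfrak{M}$, we get $\mathcal{M}_y \cap \mathcal{M}_x = \emptyset$, so $\mathcal{M}_y \subset \overline{\mathcal{M}}_x \setminus \mathcal{M}_x$. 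Combining the two equivalences gives the full chain asserted in the lemma.

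There is no genuine difficulty here: the only point that needs care is the appeal to the condition of the frontier for $\mathfrak{M}$, but this is part of what it means for $\mathfrak{M}$ to be a Whitney stratification (it follows from the Whitney regularity conditions together with local finiteness of the decomposition), and this has already been asserted in the text with a reference to \cite{duistermaat}. Disjointness of the pieces is immediate from their definition $\mathcal{M}_x = G\cdot(M_{(G_x)})_x$, since the $G$-saturations of connected components of the orbit-type submanifolds $M_{(H)}$ are either equal or disjoint. So the ``main obstacle'' is merely to invoke these two facts correctly rather than to prove anything new.
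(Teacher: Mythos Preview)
Your argument is correct. The paper itself does not actually prove the lemma: it simply prefaces the statement with ``The following is a property of a stratification'' and moves on. Your write-up is precisely the unpacking of that remark --- you invoke disjointness of the pieces and the condition of the frontier, both of which are part of what it means for $\mathfrak{M}$ to be a (Whitney) stratification, and then chase the two equivalences formally. So your approach is not different from the paper's; it is just the paper's one-line appeal spelled out in full.
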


Here ${\mathcal M}_y\prec  {\mathcal M}_x$ means ${\mathcal M}_y\preceq  {\mathcal M}_x$, but ${\mathcal M}_y\neq {\mathcal M}_x$. The lemma shows that $\mathcal M_x\preceq \mathcal M_y$ and $\mathcal M_y\preceq \mathcal M_x$ implies $\mathcal M_x=\mathcal M_y$.

\begin{dfn}
An orbit $Gx$ is {\it locally minimal} if $\overline{\mathcal M}_x$ has a $G$-invariant open neighborhood $U$ that contains no smaller orbit type. In this case $\mathcal M_x$ is called a {\it locally minimal stratum}.
\end{dfn}

The above notion will allow us to give a lower bound for $\cat_G(M)$ (see Theorem \ref{thm:lowerbound}) and is illustrated by the examples in section \ref{sec:examples}. It is also surprisingly connected to the question whether the transverse saturated LS-category of a Riemannian foliation is finite or not (see \cite{hurdertoeben}, which also gives a detailed discussion of the properties of locally minimal strata in section 6.) Obviously, a minimal orbit with respect to the orbit type relations is locally minimal. From Lemma~\ref{lemma:boundary}  we derive the following characterization:
\begin{prop}\label{prop:closed}
A stratum $\mathcal M_x$ is locally minimal if and only if it is minimal with respect to the incidence partial order, if and only if $\mathcal M_x$ is closed.
\end{prop}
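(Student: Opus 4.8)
The plan is to prove the three-way equivalence by showing a cycle of implications, using Lemma~\ref{lemma:boundary} as the principal tool together with the finiteness properties of Whitney stratifications of proper actions. Let me set up notation: write $\mathcal M_x$ for a stratum, and recall that ``minimal with respect to the incidence order'' means there is no stratum $\mathcal M_y$ with $\mathcal M_y \prec \mathcal M_x$.

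First I would prove: \emph{locally minimal $\Rightarrow$ minimal in the incidence order}. Suppose $\mathcal M_x$ is locally minimal, so $\overline{\mathcal M}_x$ has an invariant open neighborhood $U$ containing no smaller orbit type, and suppose toward a contradiction that $\mathcal M_y \prec \mathcal M_x$ for some $y$. By Lemma~\ref{lemma:boundary}, $y \in \overline{\mathcal M}_x \setminus \mathcal M_x \subset \overline{\mathcal M}_x \subset U$, and moreover $\mathcal M_y \prec \mathcal M_x$ forces $(G_y) < (G_x)$ (strictly, because if $(G_y)=(G_x)$ then $y$ and $x$ lie in the same orbit-type submanifold, and $y\in\overline{\mathcal M}_x$ together with the local structure of the stratification would put $y$ in $\mathcal M_x$, contradicting Lemma~\ref{lemma:boundary}). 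So $U$ contains the point $y$ of strictly smaller orbit type than $x$ — contradicting local minimality. Hence no such $\mathcal M_y$ exists.

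Next: \emph{minimal in the incidence order $\Rightarrow$ closed}. If $\mathcal M_x$ is not closed, then $\overline{\mathcal M}_x \setminus \mathcal M_x \neq \emptyset$; pick $y$ in this set. Since $\overline{\mathcal M}_x$ is a union of strata (a property of the stratification $\mathfrak M$), $\mathcal M_y \subset \overline{\mathcal M}_x \setminus \mathcal M_x$, so by Lemma~\ref{lemma:boundary} we get $\mathcal M_y \prec \mathcal M_x$, contradicting minimality. Finally: \emph{closed $\Rightarrow$ locally minimal}. If $\mathcal M_x$ is closed, I claim $\overline{\mathcal M}_x = \mathcal M_x$ itself serves, after enlarging slightly: by the Tubular Neighborhood Theorem applied along the closed invariant set $\mathcal M_x$, or more simply because $M_{\leq(G_x)} = G\cdot\Fix(G_x)$ is closed and contains $\mathcal M_x$, one uses that the orbit-type stratification is locally finite and the closed stratum $\mathcal M_x$ is minimal among the finitely many orbit types meeting a small tube. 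Concretely: for each $z \in \mathcal M_x$, a slice representation shows a neighborhood of $Gz$ meets only orbit types $\geq (G_z) = (G_x)$; saturating over the compact-modulo-$G$ set $\mathcal M_x$ and taking the union of these invariant tubes produces an invariant open $U \supset \mathcal M_x = \overline{\mathcal M}_x$ with no orbit type smaller than $(G_x)$, i.e. $\mathcal M_x$ is locally minimal.

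The main obstacle I anticipate is the last implication, \emph{closed $\Rightarrow$ locally minimal}: one must rule out that a small invariant tube around the closed stratum $\mathcal M_x$ picks up points of \emph{strictly smaller} orbit type. This is where the slice theorem is essential — in the slice $S$ at a point $z \in \mathcal M_x$, the $G_z$-action has $z$ as a fixed point, and all nearby isotropy groups are subgroups of $G_z$ up to conjugacy, hence the nearby orbit types are $\geq (G_z)$, never strictly smaller. The one subtlety is passing from ``each point has such a tube'' to ``there is a single invariant neighborhood of the whole closed stratum'' with the same property; here I would either invoke paracompactness of $M/G$ and patch the tubes, or note that the union of all such invariant tubes is automatically invariant, open, and — since every point in it lies in some tube with no smaller orbit type — contains no orbit of type strictly below $(G_x)$. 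I would also remark that the equivalence ``minimal in incidence order $\iff$ closed'' is essentially formal from Lemma~\ref{lemma:boundary} and the fact that closures of strata are unions of strata, so the genuine content is linking either of these to the a priori stronger-looking \emph{local} minimality condition.
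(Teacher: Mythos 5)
Your proof is correct, and the first two implications (locally minimal $\Rightarrow$ minimal in incidence order, minimal $\Rightarrow$ closed) follow the same line the paper does, namely a direct reading of Lemma~\ref{lemma:boundary} together with the fact that $\mathcal M_x$ is open and closed in $M_{(G_x)}$. The one genuine difference is the implication \emph{closed} $\Rightarrow$ \emph{locally minimal}: the paper disposes of it in a single sentence by invoking the equivariant Borsuk theorem (the invariant neighborhood $U$ it provides deformation retracts onto $\mathcal M_x$, and since $G$-homotopies can only shrink orbit types, every point of $U$ has orbit type $\geq (G_x)$), whereas you build $U$ directly as a union of slice tubes around each orbit in $\mathcal M_x$, using that the slice representation at $z \in \mathcal M_x$ only produces orbit types $\geq (G_z) = (G_x)$. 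Both are valid; your route is more hands-on and avoids the equivariant ANR machinery, while the paper's is more compressed and reuses a theorem it has already set up. One small caution: you refer to ``the compact-modulo-$G$ set $\mathcal M_x$,'' but $\mathcal M_x$ need not be compact modulo $G$; fortunately, as you then correctly observe, compactness is irrelevant because the arbitrary union of invariant tubes is already invariant, open, and free of smaller orbit types, so no patching or paracompactness argument is actually needed.
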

\begin{proof}
By Lemma \ref{lemma:boundary},  ${\mathcal M}_x$ is closed if and only if it is minimal with respect to the incidence partial order. If ${\mathcal M}_x$ is closed then it is locally minimal by the equivariant Borsuk Theorem. Now assume it is locally minimal. Let $y\in \overline{\mathcal M}_x$. By the Tubular Neighborhood Theorem we have $(G_y)\leq (G_x)$ and, since ${\mathcal M}_x$ is locally minimal,  $(G_y)=(G_x)$. Thus $y\in \overline{\mathcal M}_x$, so ${\mathcal M}_x$ is closed.
\end{proof}

\begin{rem}
For all $x \in M$, $\overline{\mathcal M}_x$ always contains a locally minimal stratum.
\end{rem}

Let $\mathfrak M_{0}$ be the set of locally minimal strata. Let $\mathcal M_x\in\mathfrak M_{0}$ and $V$ a invariant neighborhood in which there is no smaller orbit type than $(G_x)$. Then $\mathcal M_x=M_{\leq (G_x)}\cap V$. 
For a $G$-homotopy $H:U\times [0,1]\to M$, by equation (\ref{equ2}) we have
\begin{equation}\label{equ6}
H_t({\mathcal M}_x\cap U)\subset {\mathcal M}_x, \quad\mbox{hence}\quad \cat_G({\mathcal M}_x)\leq\cat_G(M).
\end{equation}
\begin{thm}\label{thm:lowerbound}
Let $G$ be a Lie group acting properly on $M$. Then 
$$
|\mathfrak M_{0}|\leq\sum_{\mathcal M_x\in\mathfrak M_{0}}\cat_G(\mathcal M_x)\leq \cat_G(M).
$$
\end{thm}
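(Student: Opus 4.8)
The plan is to establish the two inequalities separately. The right-hand inequality $\sum_{\mathcal M_x\in\mathfrak M_0}\cat_G(\mathcal M_x)\le\cat_G(M)$ is the substantive one, and I would prove it by induction on $|\mathfrak M_0|$. The left-hand inequality $|\mathfrak M_0|\le\sum_{\mathcal M_x\in\mathfrak M_0}\cat_G(\mathcal M_x)$ is immediate: each $\mathcal M_x$ is a non-empty $G$-space (it contains the orbit $Gx$), so $\cat_G(\mathcal M_x)\ge 1$, and summing over the $|\mathfrak M_0|$ locally minimal strata gives the bound.

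For the main inequality, let $\mathfrak M_0=\{\mathcal M_{x_1},\dots,\mathcal M_{x_m}\}$ be the (finite, by properness and the stratification being locally finite) list of locally minimal strata. By Proposition~\ref{prop:closed} each $\mathcal M_{x_i}$ is closed, and moreover the $\mathcal M_{x_i}$ are pairwise disjoint: if $y\in\mathcal M_{x_i}\cap\mathcal M_{x_j}$ then $\mathcal M_y=\mathcal M_{x_i}=\mathcal M_{x_j}$. Hence $A=\bigsqcup_{i=1}^m\mathcal M_{x_i}$ is a closed $G$-invariant subset, a disjoint union of closed sets, so $\cat_G(A)=\sum_{i=1}^m\cat_G(\mathcal M_{x_i})$ (a $G$-categorical cover of $A$ restricts to one of each clopen piece, and conversely one may take the union of categorical covers of the pieces). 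By the Equivariant Borsuk Theorem, $A$ has a $G$-invariant neighborhood $U$ that $G$-deformation retracts onto $A$; consequently $\cat_G(A)\le\cat_G(U)$. The key point is then to show $\cat_G(U)\le\cat_G(M)$. For this I would take a minimal $G$-categorical cover $\{V_1,\dots,V_k\}$ of $M$ with $k=\cat_G(M)$, and show that each $V_j\cap U$ is $G$-categorical \emph{in $U$}. The homotopy $H^{(j)}:V_j\times I\to M$ contracting $V_j$ to an orbit $Gz_j$ restricts to a $G$-homotopy of $V_j\cap U$; the issue is that it a priori moves points out of $U$. Here one uses that $U$ is a union of tubular-neighborhood-type sets around the $\mathcal M_{x_i}$ and that, by equation~(\ref{equ6}), for the locally minimal stratum $\mathcal M_{x_i}$ one has $H^{(j)}_t(\mathcal M_{x_i}\cap V_j)\subset\mathcal M_{x_i}$ for all $t$; combined with the retraction of $U$ onto $A$ one concatenates $H^{(j)}$ with the Borsuk retraction to produce a $G$-homotopy of $V_j\cap U$ \emph{inside} $U$ whose time-one image lies in a single orbit. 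Doing this simultaneously for all $j$ exhibits a $G$-categorical cover of $U$ of cardinality $k$, giving $\sum_i\cat_G(\mathcal M_{x_i})=\cat_G(A)\le\cat_G(U)\le\cat_G(M)$.

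I expect the main obstacle to be precisely the claim $\cat_G(U)\le\cat_G(M)$ — i.e., controlling the categorical cover of $M$ when restricted to the open neighborhood $U$ of the locally minimal strata, since categorical sets need not behave well under intersection with open sets. The robust way around this is to avoid passing through $U$ and instead argue directly: given the categorical cover $\{V_1,\dots,V_k\}$ of $M$, use that $\mathcal M_{x_i}\cap V_j$ is $G$-categorical in $\mathcal M_{x_i}$ by~(\ref{equ6}) (the contraction of $V_j$ preserves the locally minimal stratum), so $\{\mathcal M_{x_i}\cap V_j\}_{j=1}^k$ covers $\mathcal M_{x_i}$ by $G$-categorical-in-$\mathcal M_{x_i}$ sets, whence $\cat_G(\mathcal M_{x_i})\le k=\cat_G(M)$ for each $i$. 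But this only yields $\max_i\cat_G(\mathcal M_{x_i})\le\cat_G(M)$, not the sum; to upgrade to the sum one genuinely needs the disjointness of the $\mathcal M_{x_i}$ together with the Borsuk extension, packaging them into the single closed set $A$ as above so that a categorical set of $M$ meeting several strata is "charged" only once while the disjoint union forces the cardinalities to add. Making this bookkeeping precise — that a $G$-categorical cover of $M$ of size $k$ induces, via restriction to $A$ and the clopen decomposition, categorical covers of the individual $\mathcal M_{x_i}$ that can be chosen with total size at most $k$ is false in general, so the correct statement must go through $\cat_G(A)\le\cat_G(U)\le\cat_G(M)$ — is the crux, and I would spend the bulk of the argument justifying the inequality $\cat_G(U)\le\cat_G(M)$ using the explicit tubular structure of $U$ around the closed stratum $A$.
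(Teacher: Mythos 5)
There is a genuine gap, and in fact you explicitly dismiss the observation that makes the proof work. You write that it "is false in general" that a $G$-categorical cover of $M$ of size $k$ induces, by restriction, categorical covers of the individual $\mathcal M_{x_i}$ whose total size is at most $k$, because a set $U_j$ could meet several strata and be double-counted. But the whole point here is that $U_j$ \emph{cannot} meet several locally minimal strata. The paper's argument is exactly this: if $\{U_j\}_{j\in J}$ is a $G$-categorical cover with $H^j_1(U_j)\subset Gy_j$ a single orbit, and if $U_j$ meets a locally minimal stratum $\mathcal M_x$, then by equation (\ref{equ6}) the homotopy $H^j$ preserves $\mathcal M_x$, so the terminal orbit $Gy_j=H^j_1(U_j\cap\mathcal M_x)$ lies in $\mathcal M_x$. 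Since the locally minimal strata are pairwise disjoint (as you correctly note), the fixed orbit $Gy_j$ can lie in at most one of them, hence $U_j$ meets at most one $\mathcal M_x$. Consequently the index sets $J(\mathcal M_x)=\{j: U_j\cap\mathcal M_x\neq\emptyset\}$ are pairwise disjoint, and since the restricted homotopies give $\cat_G(\mathcal M_x)\le|J(\mathcal M_x)|$, you get $\sum_{\mathcal M_x}\cat_G(\mathcal M_x)\le\sum_{\mathcal M_x}|J(\mathcal M_x)|\le|J|=\cat_G(M)$. No Borsuk extension or neighborhood $U$ is needed.

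The alternative route you actually pursue, $\cat_G(A)\le\cat_G(U)\le\cat_G(M)$, has the problem you yourself flag and do not resolve: for an open $G$-invariant set $U\subset M$ there is no general inequality $\cat_G(U)\le\cat_G(M)$ (category is not monotone under passing to open subsets; one cannot simply intersect a categorical cover of $M$ with $U$, since the homotopies leave $U$). Your sketch of concatenating $H^{(j)}$ with the Borsuk retraction to fix this is not carried out, and it is unclear that it can be made to work uniformly for all $V_j$, since the homotopy $H^{(j)}$ may push points of $V_j\cap U$ entirely out of the domain where the Borsuk retraction is defined before time $1$. So as written the proposal leaves the main inequality unproved, while the clean argument — disjointness of the $J(\mathcal M_x)$ forced by (\ref{equ6}) — is the one you set aside.
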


\begin{proof}
We show the second inequality; then the first follows directly. Let  $\{U_j\}_{j\in J}$ be a covering by $G$-categorical sets with corresponding $G$-equivariant  homotopies $H^j:U_j\times [0,1]\to M$. Let $y_j \in M$ be such that $H^j_1(U_j) = G y_j$. For each $\mathcal M_x\in\mathfrak M_{0}$ let $J(\mathcal M_x)\subset J$ denote the subset of indices for which    $U_j \cap \mathcal M_x$ is non-empty. By equation (\ref{equ6}) the restriction of the corresponding homotopies to $\mathcal M_x$ gives a $G$-categorical covering of $\mathcal M_x$; in particular $\cat_G(\mathcal M_x)\leq |J(\mathcal M_x)|$. On the other hand, the image $H^j_1(U_j \cap \mathcal{M}_x)$ is contained in a single orbit $Gy_j$ which must lie in $U_j \cap \mathcal M_x$ by (\ref{equ6}).  Hence, each open set $U_j$ intersects at most one element of $\mathfrak M_0$, so the $J(\mathcal M_x), \mathcal M_x\in\mathfrak M_{0}$ are disjoint. This proves the statement.
\end{proof}

\begin{rem}
Marzantowicz gives in \cite{marzantowicz} an upper bound for $\cat_G(M)$ in terms of minimal orbit types. See Colman \cite{colman} for a refinement in case of finite $G$.
\end{rem}

Each component of the fixed point set of an action is a locally minimal set (each point of it is an orbit with minimal orbit type $(G)$.) Thus we have:

\begin{cor}\label{cor:fix}
Let $G$ be a Lie group acting properly on $M$.
The number of components of the fixed point set is a lower bound for $\cat_G(M)$.
\end{cor}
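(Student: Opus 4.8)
The plan is to derive Corollary \ref{cor:fix} as an essentially immediate consequence of Theorem \ref{thm:lowerbound}, by exhibiting the connected components of the fixed point set $\Fix(G) = M_{(G)}$ as distinct elements of $\mathfrak M_0$, the set of locally minimal strata, and then observing that each such component is nonempty and hence has equivariant category at least $1$.

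First I would check that each connected component $F$ of $\Fix(G)$ occurs as a stratum $\mathcal M_x$ for $x \in F$. Indeed, for $x \in \Fix(G)$ the isotropy is $G_x = G$, so the orbit $Gx = \{x\}$ has orbit type $(G)$, which is the unique minimum of the partial order (\ref{eq5}): for any isotropy $H$ we have $gGg^{-1} = G \subset H$ trivially, so $(G) \leq (H)$ for all $H$. Thus $M_{(G)} = \Fix(G)$, and since every point of $\Fix(G)$ is already a one-point orbit fixed by all of $G$, the $G$-saturation is trivial and $\mathcal M_x = (M_{(G)})_x = F$, the connected component of $\Fix(G)$ through $x$. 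Because $(G)$ is the global minimum orbit type, $Gx$ is a minimal orbit, hence locally minimal (as noted in the text just before Proposition \ref{prop:closed}), so $F = \mathcal M_x \in \mathfrak M_0$. Equivalently one could invoke Proposition \ref{prop:closed}: $F$ is closed in $M$, being a connected component of the closed set $\Fix(G)$, hence $\mathcal M_x = F$ is a locally minimal stratum.

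Next I would note that distinct components give distinct strata: if $F \neq F'$ are two components of $\Fix(G)$, then as subsets of $M$ they are disjoint and each equals the stratum through any of its points, so $\mathcal M_x \neq \mathcal M_{x'}$ for $x \in F$, $x' \in F'$. Therefore the number of connected components of $\Fix(G)$ is at most $|\mathfrak M_0|$. Finally, since each $F$ is a nonempty $G$-invariant set, $\cat_G(F) \geq 1$, and $\cat_G(\emptyset) = 0$ by the remark following Definition \ref{def:equivcat} is not relevant here. Combining with Theorem \ref{thm:lowerbound},
\[
\#\{\text{components of } \Fix(G)\} \leq |\mathfrak M_0| \leq \sum_{\mathcal M_x \in \mathfrak M_0} \cat_G(\mathcal M_x) \leq \cat_G(M),
\]
which is the assertion.

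I do not anticipate a serious obstacle here; the corollary is a specialization of the already-proved Theorem \ref{thm:lowerbound}. The only point requiring a line of care is the identification $\mathcal M_x = F$ — that is, verifying that the $G$-saturation in the definition $\mathcal M_x = G \cdot (M_{(G_x)})_x$ does not enlarge the connected component when $G_x = G$ — but this is immediate since $G$ fixes every point of $\Fix(G)$ pointwise, so $G \cdot F = F$. One should also be mildly careful that the components of $\Fix(G)$ are genuinely open-and-closed in $\Fix(G)$ and that $\Fix(G)$ has locally finitely many of them near any compact set, which follows from $\Fix(G)$ being a closed submanifold (a standard fact for proper, indeed isometric, actions) so that the count in the displayed inequality is meaningful even when infinite.
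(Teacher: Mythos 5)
Your proof is correct and follows essentially the same route as the paper: you identify each connected component of $\Fix(G)=M_{(G)}$ with a locally minimal stratum $\mathcal M_x$ (using that $(G)$ is the global minimum orbit type, so these orbits are minimal and hence locally minimal), and then apply Theorem~\ref{thm:lowerbound}. The paper states exactly this in the sentence preceding the corollary; your write-up merely spells out the identification $\mathcal M_x = F$ and the disjointness, which are the right small checks to make.
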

This corollary also justifies the refinement of orbit types by separating its orbit type submanifolds into its basic components, the $G$-components. Counting the number of minimal orbit type submanifolds would not provide a good lower bound, as the entire fixed point set is the sole orbit type submanifold $M_{(G)}$. 

\begin{ex}\label{ex:SU}
Let us prove that  
\begin{equation}
n+1\leq\cat_{SU(n+1)}SU(n+1)
\end{equation}
for the equivariant category of the action of $G=SU(n)$ on itself by conjugation. The center $Z(G)$ of a Lie group $G$ is the fixed point set $\Fix_G(G)$ of its action on itself by conjugation. In this case we have $Z(SU(n+1))=\{e^{\frac{2\pi ik}{n+1}}I_{n+1}\mid k=0,\ldots,n\}$. By Corollary \ref{cor:fix} we have $n+1\leq\cat_{SU(n+1)}SU(n+1)$. This lower bound is optimal as we will see in Example~\ref{ex:Un}. For the conjugation action of other Lie groups, say $SO(2n)$ for example, this estimate is not optimal and can be improved by the estimate in Theorem \ref{thm:lowerbound}.
\end{ex}

LS category theory has a close relationship with critical point theory for functions, and the next concept develops this parallel for $G$-category. 

\begin{dfn}\label{defn:hierarchy}
A {\it hierarchy} of $(M,G)$ is a monotone function $f:\mathfrak M'\to\mN_0$ with respect to the ordering (\ref{eq5}). That is,  $(G_x)\leq (G_y)$ implies $f(M_{(G_x)})\leq f(M_{(G_y)})$.
\end{dfn}

 A hierarchy $f:\mathfrak M'\to\mN_0$   has  a natural extension  to a map $\bar f:M\to\mN_0$ defined by $\bar f(x):=f(M_{(G_x)})$. Then  $f$ defines a partition $\mathfrak M_f'$ of $M$ by 
 \begin{equation}\label{defn:h-partition}
\mathfrak M_f'=f^{-1}(\mN_0)=\{f^{-1}(n)\mid n\in\mN_0\} \quad \mbox{and set} \quad  \mathfrak M_{f,n}':=\bigcup_{i=0}^n f^{-1}(i)
\end{equation}

We can think of a hierarchy  $f$ as a function on the directed graph of orbit types of $(M,G)$ (see section~2.8 of \cite{duistermaat}) that respects the partial order between the vertices. Note that there is a directed vertex from any orbit type to a principal orbit type.

\begin{ex} \label{ex:hf}
We consider a few examples to illustrate hierarchy functions.
\begin{enumerate}
\item The most common example is given by $f(M_{(G_x)}):=\dim Gx$. This function is well defined by Lemma~\ref{lemma:boundary}, and  $\mathfrak M_f'$ is the stratification of $M$ by orbit dimension. Let $S^k=\mathfrak M_{f,k}'$.  \\

\item We can refine the last example. The action of $G$ on $S_k:=S^k\backslash S^{k-1}$ gives a foliation by orbits of dimension $k$. For $x \in S_k$ its   orbit   $Gx \subset S_k$ may have      normal holonomy, as a leaf of the induced foliation. This holonomy group must be finite. Moreover, if $G$ is compact then  there are at most finitely many orbits with holonomy, and we can define 
 $n_k$ as the maximal cardinality of the holonomy groups of leaves in $S_k$. For $G$ non-compact, we assume that each $n_k$ is finite. 
Let $S_{k,i}$ for $i=1,\ldots,n_k$ be the union of orbits with holonomy group of cardinality $i$. 
  Now define 
 \begin{equation}
\bar f(x)=i-1+\sum_{j=1}^kn_j ~ \mbox{ if }  ~ x\in S_{k,i}
\end{equation}
This map only depends on the orbit type  and therefore defines    a hierarchy, with $S_f'$ being  the   refined stratification of $M$. 
This refines the   stratification of $M$ by dimension, to a stratification by dimension and holonomy cardinality. The stratification $S_f'$  has been introduced in the study of the singular Riemannian foliations defined by the leaf closures in Riemannian foliations (see e.g. \cite{Haefliger1985, Haefliger1988,molino}.) 
\\

\item The following hierarchy is defined if there is only one principal orbit type $(H)$ (e.g. if $M/G$ is connected) and there are only a finite number of orbit types (e.g. if $G$ is compact). Let $(G_x)$ be an arbitrary orbit type. A chain
$$
\mathcal C:\quad (G_x)=(H_1)\leq \cdots \leq (H_n)=(H)
$$
of orbit types from $(G_x)$ to $(H)$ is said to have length $n$. Let $l(G_x)$ be the maximal length of chains from $(G_x)$ to $(H)$ and $L=\max\{l(G_x)\mid x\in M\}$. We define $\bar f(x):=L-l(G_x)$. \\

\item Alternatively we can define $\bar f(x)$ as the maximal length of chains from some minimal orbit type to $(G_x)$. 
This yields the stratification by holonomy groups used in the paper \cite{colmanhurder} on the study of transverse LS-category for compact Hausdorff foliations.

\end{enumerate}
\end{ex}

A hierarchy function provides a lower bound estimate on the $G$-category: 
\begin{prop}
Let $G$ be a Lie group acting properly on $M$. 
Let $f:\mathfrak M'\to\mN_0$  be a hierarchy function, with induced map 
 $\bar f:M\to\mN_0$. Then   
$\bar f$ is lower semicontinuous; that is,  
$$\liminf_{y\to x}\bar f(y)\geq\bar f(x).$$
Hence the strata  $\mathfrak M_{f,n}'$ are preserved under $G$-homotopy, and  thus 
\begin{equation}
\cat_G(\mathfrak M_{f,n}')\leq \cat_G(\mathfrak M_{f,n+1}')\leq\cat_G(M)
\end{equation}
\end{prop}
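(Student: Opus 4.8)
The plan is to reduce everything to two assertions: first, that $\bar f$ is lower semicontinuous, and second, that lower semicontinuity of $\bar f$ forces each sublevel set $\mathfrak M_{f,n}'$ to be invariant under $G$-homotopies, after which the category inequalities follow by the same extension-and-restriction argument already used in the proof of Theorem~\ref{thm:lowerbound}. So I would organize the proof in three short steps.

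\emph{Step 1: lower semicontinuity.} I would argue this locally using the Tubular Neighborhood Theorem. Fix $x \in M$ and let $U \cong G \times_{G_x} S$ be an invariant tubular neighborhood of $Gx$ with slice $S$. For any $y \in U$, the stabilizer $G_y$ is conjugate to a subgroup of $G_x$; more precisely $(G_y) \leq (G_x)$ as orbit types. By Lemma~\ref{lemma:boundary} (equivalently, by the standard fact that orbit types are constant on connected strata and the order is compatible with incidence) we get $f(M_{(G_y)}) \leq f(M_{(G_x)})$ because $f$ is monotone for the ordering (\ref{eq5}). Hence $\bar f(y) = f(M_{(G_y)}) \leq f(M_{(G_x)}) = \bar f(x)$ for all $y$ in the neighborhood $U$ of $x$. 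Taking $y \to x$ gives $\liminf_{y\to x}\bar f(y) \geq \bar f(x)$; actually the stronger statement that $\bar f$ is bounded above near $x$ by $\bar f(x)$, which is exactly lower semicontinuity of the (integer-valued) function $\bar f$ since its sublevel sets $\{\bar f \leq n\}$ are then open — wait, I should be careful: $\bar f$ small near $x$ means $\{\bar f \le n\}$ is a neighborhood of each of its points, i.e. open, which is lower semicontinuity. Good.

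\emph{Step 2: sublevel sets are $G$-homotopy invariant.} Let $H : U \times [0,1] \to M$ be a $G$-homotopy defined on an invariant open set $U$, and suppose $x \in U$ with $\bar f(x) \leq n$, i.e. $x \in \mathfrak M_{f,n}'$. Applying the Lemma after Definition~\ref{def:equivcat} (namely (\ref{equ1}): along any $G$-path $G_x \subset G_{x_t}$, hence $(G_{x_t}) \leq (G_x)$) to the $G$-path $t \mapsto H_t(x)$, we get $(G_{H_t(x)}) \leq (G_x)$ for all $t$. Since $f$ is monotone, $\bar f(H_t(x)) = f(M_{(G_{H_t(x)})}) \leq f(M_{(G_x)}) = \bar f(x) \leq n$. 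Therefore $H_t(\mathfrak M_{f,n}' \cap U) \subset \mathfrak M_{f,n}'$ for all $t$; in other words the strata $\mathfrak M_{f,n}'$ are preserved under $G$-homotopy, exactly as claimed.

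\emph{Step 3: the category inequalities.} First $\mathfrak M_{f,n}' \subseteq \mathfrak M_{f,n+1}'$ as sets (by (\ref{defn:h-partition})). Given a $G$-categorical cover $\{U_j\}$ of $\mathfrak M_{f,n+1}'$ with homotopies $H^j$, Step~2 shows that each $H^j$ restricts to a $G$-homotopy of $U_j \cap \mathfrak M_{f,n}'$ staying inside $\mathfrak M_{f,n}'$; since $H^j_1(U_j)$ is a single orbit $Gy_j$ and $H^j_1(U_j \cap \mathfrak M_{f,n}')$ is contained in that orbit and in $\mathfrak M_{f,n}'$, the set $U_j \cap \mathfrak M_{f,n}'$ is $G$-categorical in $\mathfrak M_{f,n}'$. (Here I use the usual convention, as in Theorem~\ref{thm:lowerbound}, that categorical subsets may be contracted within the subspace; the same reasoning that the target orbit already lies in the subspace works as in the proof of that theorem.) Hence $\cat_G(\mathfrak M_{f,n}') \leq \cat_G(\mathfrak M_{f,n+1}')$. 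The inequality $\cat_G(\mathfrak M_{f,n+1}') \leq \cat_G(M)$ is the same statement with the full $M$ in place of $\mathfrak M_{f,n+1}'$ and any $G$-categorical cover of $M$: the target orbit of each categorical chart in $M$, restricted to $\mathfrak M_{f,n+1}'$, lands back in $\mathfrak M_{f,n+1}'$ by Step~2, so the restriction is categorical in $\mathfrak M_{f,n+1}'$.

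The only genuinely delicate point is the bookkeeping in Step~3 about where the contracted-to orbit lies — i.e. ensuring that a $G$-categorical subset of $M$ restricts to a $G$-categorical subset of the invariant subspace $\mathfrak M_{f,n}'$ rather than merely a set that contracts into $M$. This is handled exactly as in the proof of Theorem~\ref{thm:lowerbound}: since $\mathfrak M_{f,n}'$ is $G$-homotopy invariant and $H^j_1(U_j)$ meets $\mathfrak M_{f,n}'$, the whole orbit $Gy_j$ lies in $\mathfrak M_{f,n}'$; so everything stays inside. Steps~1 and~2 are routine consequences of the Tubular Neighborhood Theorem and (\ref{equ1}) respectively.
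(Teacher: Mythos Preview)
Your Steps~2 and~3 are correct and coincide with the paper's argument (the paper is terser and does not spell out Step~3, but the reasoning is identical). Step~1, however, has the orbit-type inequality reversed, and the error propagates through the whole step.

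Under the convention~(\ref{eq5}), $(H)\leq (K)$ means $K$ is conjugate \emph{into} $H$. So ``$G_y$ is conjugate to a subgroup of $G_x$'' (which is what the Tubular Neighborhood Theorem gives for $y$ near $x$) translates to $(G_x)\leq (G_y)$, not $(G_y)\leq (G_x)$. Monotonicity of $f$ then yields $\bar f(x)\leq \bar f(y)$ for all $y$ in the tube, which is exactly the lower-semicontinuity inequality $\liminf_{y\to x}\bar f(y)\geq\bar f(x)$. Your version, $\bar f(y)\leq \bar f(x)$ near $x$, would be \emph{upper} semicontinuity; likewise, ``sublevel sets $\{\bar f\le n\}$ open'' characterizes upper, not lower, semicontinuity (lower semicontinuity is equivalent to the sublevel sets $\mathfrak M_{f,n}'$ being \emph{closed}). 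Once you flip the direction, Step~1 becomes exactly the paper's argument. Note also that Lemma~\ref{lemma:boundary} is not needed here; the Tubular Neighborhood Theorem alone gives the orbit-type comparison.
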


\begin{proof}
Let $x_n$ be a sequence in $M$ converging to $x$. By the Tubular Neighborhood Theorem we have $(G_x)\leq (G_{x_n})$ for large $n$. Since $f$ is monotone,  $\bar f(x)\leq \bar f(x_n)$ for large $n$. Thus $\bar f$ is lower semicontinuous. We now show that $\mathfrak M_{f,n}'$ is preserved under $G$-homotopy. Let $H:U\times [0,1]\to M$ be a $G$-homotopy with $x\in U$. Then $(G_{x_t})\leq (G_x)$, so $\bar f(x_t)\leq \bar f(x)$, i.e. $x_t\in \mathfrak M_{f,f(x)}'$.
\end{proof}
We apply this result  to the hierarchy $f$ of example (\ref{ex:hf}.2) to obtain: 

\begin{cor}\label{cor:hierlowerbound}
Let $G$ be a Lie group acting properly on $M$. Then we have
\begin{equation}\label{eq:superest}
\cat_G(S^0)\leq\cat_G(S_{k,i})\leq\cat_G(S_{l,j})\leq \cat_G(S^l)\leq\cat_G(S)\leq \cat_G(M), 
\end{equation}
for $k\leq l$ and if $k=l$ for $i\geq j$. Here, $S$ denotes the singular stratum.
\end{cor}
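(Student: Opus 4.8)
The plan is to deduce the entire chain of inequalities in \eqref{eq:superest} from the preceding Proposition applied to a suitably chosen hierarchy, together with the elementary monotonicity $\cat_G(A) \leq \cat_G(B)$ whenever $A \subset B$ is $G$-invariant and preserved under $G$-homotopy. The main point is that the refined stratification from Example \eqref{ex:hf}.2 is totally ordered by the single natural number $\bar f(x) = i - 1 + \sum_{j=1}^{k} n_j$ for $x \in S_{k,i}$, so each set $S_{k,i}$ is a level set $\bar f^{-1}(m)$ and each $S^k$ is a sublevel set $\mathfrak{M}'_{f,m}$ for an appropriate $m$. Once this bookkeeping is set up, the inequalities are immediate from the Proposition, which tells us precisely that sublevel sets $\mathfrak{M}'_{f,n}$ are $G$-homotopy invariant and satisfy $\cat_G(\mathfrak{M}'_{f,n}) \leq \cat_G(\mathfrak{M}'_{f,n+1}) \leq \cat_G(M)$.

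Concretely, I would first record that under the hierarchy $f$ of \eqref{ex:hf}.2, the sublevel set $\mathfrak{M}'_{f,m}$ with $m = \sum_{j=1}^{k} n_j - 1$ equals $S^{k-1}$ together with... actually more carefully: the set of points with $\bar f(x) \le i - 1 + \sum_{j=1}^{k} n_j$ is exactly $S^{k-1} \cup \bigcup_{i'\le i} S_{k,i'}$, and in particular taking $i = n_k$ gives $S^k$. Thus each $S^k$ and each $S_{k,i}$ arises as a sublevel set of $\bar f$. The Proposition then yields $\cat_G$ is monotone along the ordering of these sublevel sets, which is exactly the ordering ``$k \le l$, and if $k = l$ then $i \ge j$'' stated in the corollary (smaller $k$, or equal $k$ with larger holonomy cardinality $i$, sits lower in the stratification). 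This gives $\cat_G(S^0) \leq \cat_G(S_{k,i}) \leq \cat_G(S_{l,j}) \leq \cat_G(S^l)$.

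For the last two inequalities, I note that $S^l \subset S$ need not hold in general (indeed $S$ is the singular stratum, the set of points on orbits of less than maximal dimension, which is $S^{r-1}$ where $r$ is the regular orbit dimension, so $S^l \subset S$ precisely when $l < r$); the intended reading is that the chain runs over values of $l$ for which $S^l$ is contained in the singular set, and then $\cat_G(S^l) \le \cat_G(S)$ follows because $S$ itself is a closed $G$-invariant set preserved under $G$-homotopy — it is $\mathfrak{M}'_{f,m}$ for $m = \sum_{j<r} n_j - 1$ — so the Proposition applies once more. Finally $\cat_G(S) \leq \cat_G(M)$ is again the Proposition (or simply equation \eqref{equ2}/\eqref{equ6}, since $S = M_{<(H)}$ for the principal orbit type $(H)$ and is therefore homotopy-invariant).

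The only genuine obstacle is the indexing/normalization step: verifying that the number $\bar f$ assigned in \eqref{ex:hf}.2 really does realize each $S_{k,i}$ and each $S^k$ as a sublevel set, and that the resulting numerical order on these sublevel sets coincides with the order ``$k\le l$, or $k=l$ and $i\ge j$'' claimed in the statement. This is a routine but slightly fiddly check with the telescoping sum $\sum_{j=1}^{k} n_j$; I would dispatch it by observing that $\bar f$ is strictly increasing in $k$ across the blocks $\{x : x \in S_k\}$ and, within a block, strictly increasing in the holonomy cardinality $i$, so the sublevel filtration refines exactly as claimed. Everything else is a direct citation of the Proposition and of the homotopy-invariance of $\mathfrak{M}'_{f,n}$ established there.
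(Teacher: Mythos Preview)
Your proposal is correct and follows exactly the same approach as the paper: the paper's entire proof is the single sentence ``We apply this result to the hierarchy $f$ of example (\ref{ex:hf}.2) to obtain [the corollary]'', and you have simply unpacked that sentence by identifying the sets $S^k$ and $S_{k,i}$ with sublevel sets $\mathfrak M'_{f,n}$ of the hierarchy and invoking the monotonicity $\cat_G(\mathfrak M'_{f,n})\leq\cat_G(\mathfrak M'_{f,n+1})\leq\cat_G(M)$ from the preceding Proposition.

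One small caution on the bookkeeping you flag as ``routine but slightly fiddly'': you write that $\bar f$ is strictly increasing in the holonomy cardinality $i$ within a block, yet then say that larger $i$ sits \emph{lower} in the filtration --- these two statements are in tension. In fact, for the hierarchy to be monotone in the sense of Definition~\ref{defn:hierarchy}, orbits with larger holonomy (hence larger isotropy, hence smaller orbit type) must receive \emph{smaller} values of $\bar f$; the explicit formula in Example~\ref{ex:hf}(2) should be read with that convention, and then the ordering ``$k\leq l$, and if $k=l$ then $i\geq j$'' in the corollary matches the sublevel filtration exactly as you claim. This is a notational wrinkle already present in the paper rather than a flaw in your argument.
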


\section{Polar actions}\label{sec:polar}

In this section we will review the definition and properties of a polar action. In section~\ref{sec:upper}, we will give an upper bound for the category of a polar action in terms of the action of its   Weyl group.

\begin{dfn}\label{def:polar}
Let $G$ Lie group acting smoothly by isometries on a complete Riemannian manifold $M$. A \emph{section} for the $G$-action is an isometrically immersed complete  submanifold $i:\Sigma\to M$ which meets every orbit and always orthogonally. Then the  dimension of $\Sigma$ is equal to   the   cohomogeneity of the action, which was denoted by $q$.   Note that for any $g \in G$, the map $g \circ  i \colon g\Sigma \to M$  is   again a section. 
A {\it polar action} is a $G$-action with a section. If   $\Sigma$ is a flat submanifold, then the action is called {\it hyperpolar}.
\end{dfn}

\begin{rem}
The set of regular points in a section is open and dense in it. A section is always a totally geodesic submanifold (see \cite{szenthe}). 
\end{rem}

\begin{rem} The immersion $i$ might not be injective. If injectivity fails in a regular point of the action, we can write $i=j\circ\rho$ where $\rho:\Sigma\to\Sigma'$ is a covering map and $j:\Sigma'\to M$ is a section that is injective in regular points. We will always assume that $i$ is reduced in this sense. On the other hand, injectivity can still fail at singular points of the action. 
\end{rem}

The geometry of polar actions has been extensively studied 
\cite{berndttamaru,dadok,kollross1,kollross2,kostant,palaisterng,podestathorbergsson,szenthe,thorbergsson1,thorbergsson2}. 
Let us consider a few examples. 

\begin{ex}\label{ex:hyperpolar} The following examples are all hyperpolar actions.
\begin{enumerate}
\item Isometric cohomogeneity one actions. The sections are the normal geodesics of a regular orbit. These have been classified in special cases, although remains an open problem to classify all such actions \cite{kollross1, berndttamaru}.\\

\item A compact Lie group $G$ with bi-invariant metric acting on itself by conjugation. The maximal tori are the sections. \\

\item  Let $N$ be a symmetric space. The identity component of the isometry group,  $G=I(N)_0$, acts transitively on $N$.  We can write $N=G/K$, where $K=G_p$ for some point $p\in N$, and $(G,K)$ is called a {\it symmetric pair}. Then the isotropy action 
$$K\times G/K\to G/K~ ; \quad (k,gK)\mapsto kgK$$ 
and its linearization $K\times (T_{[K]}G/K)\to T_{[K]}G/K$ at the the tangent space to the point [K],    are hyperpolar. The sections are the maximal flat submanifolds through $[K]$, and their tangent spaces in $[K]$, respectively. These are called   \emph{s-representations}.\\

\item Let $(G,K_1)$ and $(G,K_2)$ be two symmetric spaces of the above form. Then the left action of $K_1$ on $G/K_2$ and its linearization are hyperpolar. These actions are called {\it Hermann actions}. They generalize examples (2) and (3).
\end{enumerate}
\end{ex}
\begin{rem}
Dadok has classified all linear representations that are polar in \cite{dadok}: they are orbit equivalent to the linearized actions of example (3), the s-representations. Kollross has classified in \cite{kollross1} all hyperpolar actions on irreducible, simply-connected symmetric spaces of compact type: they are of type (1) and of type (4). Polar actions that are not hyperpolar on symmetric spaces of compact type have been found only on compact rank one symmetric spaces; for a classification, see \cite{podestathorbergsson}. For a survey on these objects as well as on polar actions, see \cite{thorbergsson1} and \cite{thorbergsson2}.
\end{rem}

\begin{thm}[Slice Theorem for polar actions \cite{palaisterng}]
Let $G$ a Lie group with a proper polar action on $M$. Then the slice representation of $G_x$ on  $\nu_x (Gx)$ is hyperpolar with sections of the form $T_x\Sigma$, where $\Sigma$ is a section through $x$.
\end{thm}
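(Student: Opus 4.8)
The plan is to reduce the statement to the local structure of the section and the isometric nature of the action, working entirely in the normal space $\nu_x(Gx)$ via the exponential map. First I would fix $x \in M$, let $\Sigma$ be a section through $x$ (one exists since the $G$-action is polar, and one passes through every point, in particular through $x$), and observe that since $\Sigma$ meets $Gx$ orthogonally, we have $T_x\Sigma \subset \nu_x(Gx)$; a dimension count using $\dim \Sigma = q = \operatorname{codim} Gx$ (valid at a regular $x$, and in general $\dim T_x\Sigma \le \operatorname{codim} Gx$ with the relevant inclusion still available) shows $T_x\Sigma$ is a candidate section for the slice representation. I would then recall the slice decomposition $U \cong G \times_{G_x} S$ from the Tubular Neighborhood Theorem, with the slice $S = \exp_x(\nu_x^r(Gx))$, so that the slice representation of $G_x$ on $\nu_x(Gx)$ is conjugate (via $\exp_x$, which is a $G_x$-equivariant isometry near $0$ because the metric is $G$-invariant and $G_x$ fixes $x$) to the $G_x$-action on $S$.

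The core step is to show that $T_x\Sigma$ is a section for the slice representation, i.e.\ that it meets every $G_x$-orbit in $\nu_x(Gx)$ and always orthogonally. For orthogonality: the section $\Sigma \subset M$ is totally geodesic (cited remark) and meets every $G$-orbit orthogonally, so in particular at any point $y = \exp_x(\vec v)$ of $\Sigma \cap S$ close to $x$ the tangent space $T_y\Sigma$ is orthogonal to $T_y(Gy)$. Pulling back along $\exp_x$ and using that geodesics from $x$ inside the totally geodesic $\Sigma$ correspond to straight rays in $T_x\Sigma$, the linearized statement is that $T_x\Sigma$ is orthogonal to the $G_x$-orbit directions in $\nu_x(Gx)$; this follows by differentiating the orthogonality relation along such a ray and using isometry-invariance. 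For the meeting property: any $G_x$-orbit in $\nu_x^r(Gx)$ corresponds under $\exp_x$ to a $G_x$-orbit in $S$, hence to (a piece of) a $G$-orbit in $M$; since $\Sigma$ meets every $G$-orbit, after possibly applying an element of $G_x$ we can arrange the intersection point to lie in $\Sigma \cap S$, and then its preimage under $\exp_x$ lies in $T_x\Sigma$. One must check the relevant intersection can be taken inside the small ball, which follows because $G_x$ is compact (properness) and we may shrink $r$.

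Finally, to conclude that the slice representation is \emph{hyper}polar, I would argue that $T_x\Sigma$, with the induced flat metric from $\nu_x(Gx) \cong \RR^{\dim \nu}$ as a linear subspace through the origin, is automatically a flat totally geodesic submanifold — any linear subspace of Euclidean space is. Since a polar representation whose section is flat is hyperpolar by definition, this gives the claim, with the section being precisely $T_x\Sigma$ for $\Sigma$ a section through $x$.

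The main obstacle I anticipate is the meeting property at singular $x$: one has to be careful that $\dim T_x\Sigma$ really equals the cohomogeneity of the slice representation of $G_x$ on $\nu_x(Gx)$ (which equals $q$), and that the intersection of a given $G$-orbit with $\Sigma$ can be localized inside the tube $U$ and then transported into $S$ by $G_x$ rather than by a general element of $G$. This is where the properness of the action and the $G$-equivariant structure $U \cong G\times_{G_x} S$ do the real work, and where the reduced-immersion hypothesis on $i$ (so that distinct sheets of $\Sigma$ through $x$ are genuinely handled) needs to be invoked.
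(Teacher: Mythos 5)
The paper itself gives no proof of this result; it is cited from Palais--Terng \cite{palaisterng}, so there is no in-paper argument to compare with and your proposal must stand on its own merits.

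The genuine gap is the meeting property, which you correctly flag as the obstacle. Your step ``after possibly applying an element of $G_x$ we can arrange the intersection point to lie in $\Sigma\cap S$'' does not hold up: the slice $S$ is $G_x$-invariant, so if the point of $\Sigma\cap Gy$ you found is not already in $S$, no element of $G_x$ will move it into $S$, while an element of $G\setminus G_x$ that does move it into $S$ has no reason to land it on $\exp_x(T_x\Sigma)$. The two missing ingredients are: (i) the orbit-Gauss lemma for isometric actions --- a geodesic issuing from $x$ normally to $Gx$ remains normal to every orbit it meets --- from which it follows that for small $v\in\nu_x(Gx)$ with $y=\exp_x(v)$ regular, the reversed geodesic from $y$ to $x$ lies in $\nu_y(Gy)$, hence $x$ belongs to $\Sigma_y$, the unique section through the regular point $y$, and $v\in T_x\Sigma_y$; and (ii) transitivity of $G_x$ on sections through $x$, which supplies $g\in G_x$ with $g\Sigma_y=\Sigma$ and thus $g_*v\in T_x\Sigma$. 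Singular $v$ are then handled by a limiting argument using compactness of $G_x$. Note that (ii) appears in the paper as Corollary~\ref{cor:transitive}, stated \emph{after} this theorem, so you would need to establish it independently to avoid circularity (it follows from the $G$-transitivity on all sections discussed just before that corollary together with (i)). A lesser point: you cannot literally pull orthogonality back along $\exp_x$, since $\exp_x$ is a $G_x$-equivariant diffeomorphism onto $S$ but not an isometry; the clean route is to linearize the Killing field $\tilde X$ of $X\in\mathfrak g_x$ at $x$ (it vanishes at $x$ and is normal to $\Sigma$ along $\Sigma$) to obtain $\langle\nabla_w\tilde X,w'\rangle=0$ for all $w,w'\in T_x\Sigma$. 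Your final observation that hyperpolarity is then automatic, because $T_x\Sigma$ is a linear subspace and hence flat, is correct.
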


Note that any polar action acts transitively on the set of sections. Indeed, let $Gx$ be an arbitrary regular orbit. By definition any two sections $\Sigma_1$ and $\Sigma_2$ meet $Gx$ say in $x_1$ respectively $x_2$. Let $g\in G$ such that with $gx_1=x_2$. Then $g_*\nu_{x_1}Gx=\nu_{x_2}Gx$. Since $\nu_{x_i}Gx=T_{x_i}\Sigma_i$ and $\Sigma_i$ is totally geodesic, we have $g(\Sigma_1)=\Sigma_2$. We have the following corollary.
\begin{cor}\label{cor:transitive}
The  isotropy group $G_x$ acts transitively on the sections through $x$.
\end{cor}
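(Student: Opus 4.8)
The plan is to deduce Corollary~\ref{cor:transitive} directly from the Slice Theorem for polar actions, mimicking the argument just given for the transitivity of $G$ on the set of all sections but now keeping track of the base point $x$. The key point is that a section through $x$ is determined, locally near $x$, by its tangent space $T_x\Sigma \subset T_xM$, and the Slice Theorem identifies these tangent spaces with the sections of the slice representation of $G_x$ on $\nu_x(Gx)$.

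First I would fix $x \in M$ and let $\Sigma_1, \Sigma_2$ be two sections through $x$. Since a section is totally geodesic (by the Remark following Definition~\ref{def:polar}), each $\Sigma_i$ is, near $x$, the image under the exponential map $\exp_x$ of a neighborhood of $0$ in the linear subspace $T_x\Sigma_i \subset T_xM$. Because $\Sigma_i$ meets the orbit $Gx$ orthogonally and has dimension $q = \dim \Sigma_i$ equal to the cohomogeneity, we have $T_x\Sigma_i \subset \nu_x(Gx)$, and in fact $T_x\Sigma_i$ is a section for the slice representation of $G_x$ on $\nu_x(Gx)$ by the Slice Theorem. Next I would invoke the transitivity statement already established in the text — that a polar action acts transitively on its set of sections — applied to the (proper, polar) slice representation of $G_x$ on the vector space $\nu_x(Gx)$: there exists $h \in G_x$ with $h_* (T_x\Sigma_1) = T_x\Sigma_2$. (Alternatively one argues directly: pick a regular vector $v_1 \in T_x\Sigma_1$ and a regular vector $v_2$ in the $G_x$-orbit direction matching $\Sigma_2$, use that $G_x$ acts with the $T_x\Sigma_i$ as sections, and conjugate.) Finally, since $h \in G_x$ fixes $x$ and $\Sigma_i = \exp_x(T_x\Sigma_i)$ locally, we get $h\Sigma_1$ and $\Sigma_2$ agreeing near $x$; as both are complete totally geodesic submanifolds through $x$ with the same tangent space at $x$, they coincide, so $h\Sigma_1 = \Sigma_2$ with $h \in G_x$.

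I expect the main obstacle to be the careful bookkeeping in the reduction to the slice representation: one must be sure that ``$\Sigma$ is a section through $x$'' really does correspond bijectively (or at least surjectively onto what we need) to ``$T_x\Sigma$ is a section for the slice representation,'' and that the transitivity of $G$ on sections, as proved in the text, applies verbatim to the linear polar action $G_x \curvearrowright \nu_x(Gx)$. The text's proof of transitivity on sections used only that sections are totally geodesic and meet a regular orbit, both of which hold for the slice representation; but a little care is needed because for a linear action ``totally geodesic through the origin'' means ``linear subspace,'' and two linear subspaces with the same ``tangent space at $0$'' are literally equal, which actually makes the argument cleaner rather than harder. A secondary point is the passage from the infinitesimal statement $h_*T_x\Sigma_1 = T_x\Sigma_2$ back to $h\Sigma_1 = \Sigma_2$: this uses uniqueness of complete totally geodesic submanifolds with prescribed tangent space at a point, which is standard, but should be cited (e.g.\ via the same reasoning used for $g(\Sigma_1) = \Sigma_2$ in the paragraph preceding the corollary).
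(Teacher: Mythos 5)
Your proof is correct and reconstructs exactly what the paper leaves implicit: the corollary is placed right after both the Slice Theorem and the paragraph establishing transitivity of $G$ on sections, and your argument combines them in the intended way by running the transitivity argument on the linear polar action $G_x \curvearrowright \nu_x(Gx)$ and then lifting the resulting equality $h_*T_x\Sigma_1 = T_x\Sigma_2$ back to $h\Sigma_1 = \Sigma_2$ via uniqueness of complete totally geodesic submanifolds with a prescribed tangent space at $x$. This is the same route the authors evidently have in mind (and indeed rely on again, in essentially this form, inside the proof of Proposition~\ref{prop:fixset}).
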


Let us also note that  Dadok's classification of hyperpolar actions   implies:
\begin{cor}\label{dadok}
The slice representation of a proper polar action is an s-re\-pre\-sen\-ta\-tion.
\end{cor}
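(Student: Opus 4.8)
The plan is to combine the Slice Theorem for polar actions with Dadok's classification. By the Slice Theorem stated above, for any $x \in M$ the slice representation of $G_x$ on the normal space $\nu_x(Gx)$ is hyperpolar, with sections of the form $T_x\Sigma$ where $\Sigma$ is a section through $x$. This is already a linear polar (indeed hyperpolar) representation of the compact-by-proper isotropy group $G_x$. Dadok's theorem, as recalled in the preceding remark, asserts that every polar linear representation is orbit equivalent to the isotropy representation of a symmetric pair, i.e.\ to an s-representation. Applying Dadok's classification directly to the slice representation therefore yields the conclusion.

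First I would note that for a proper action the isotropy group $G_x$ is compact, so Dadok's classification (which is stated for representations of compact groups) applies. Second, I would observe that the slice representation is polar: this is immediate from the Slice Theorem, since having a section $T_x\Sigma$ that meets every orbit orthogonally is precisely the definition of a polar representation; in fact it is hyperpolar because $T_x\Sigma$, being a linear subspace, is flat. Third, I would invoke Dadok's theorem to conclude that this representation is orbit equivalent to an s-representation, which is exactly the assertion of the corollary. If one wants orbit equivalence rather than literal identification — which is all that Dadok gives — then the statement should be read in that sense, consistent with how "s-representation" is used in the earlier remark on Dadok's work.

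The main obstacle is essentially bookkeeping rather than mathematics: one must be careful about what "is an s-representation" means, since Dadok's theorem only gives orbit equivalence, not that the slice representation is literally the isotropy representation of a symmetric pair. There could also be a subtlety if $G$ is noncompact, but properness forces $G_x$ to be compact, so the cited form of Dadok's classification is applicable. Once these points are settled, the corollary is an immediate consequence of the two results quoted just above it, and no further argument is required.
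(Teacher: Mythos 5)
Your argument is exactly the paper's: the Slice Theorem gives that the slice representation is hyperpolar, and Dadok's classification (as recalled in the remark) says every polar linear representation of a compact group is orbit equivalent to an s-representation, with properness of the $G$-action supplying compactness of $G_x$. The paper treats this as immediate and offers no further proof, so your write-up, including the caveats about orbit equivalence and compact isotropy, matches and slightly elaborates the intended reasoning.
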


\section{Weyl group and $G$-equivariant blow-up}\label{sec:weyl}

We recall the definition of the Weyl group for a polar action, which generalizes the Weyl group of a classical Lie group $G$, acting on itself via the adjoint map.  
\begin{dfn} 
Let $G$ a Lie group acting smoothly by isometries on a complete Riemannian manifold $M$, and assume the action is polar with section  $i:\Sigma\to M$. Let 
\begin{eqnarray}
N := N_G(\Sigma) & = & \{g\in G\mid g(i(\Sigma))=i(\Sigma)\}\\
Z := Z_G(\Sigma) & = & \{g\in G\mid gi(x)=i(x)\ \mbox{for any}\ x\in\Sigma\}
\end{eqnarray}
Then the Weyl group is 
 \begin{equation}
W=N_G(\Sigma)/Z_G(\Sigma)
\end{equation}
\end{dfn}
The action of $N$ descends to an action on $\Sigma$ for which $i:\Sigma \to M$ is $N$-equivariant. The generalized Weyl group $W$ therefore acts effectively and isometrically on $\Sigma$. The orbits of $G$ and $W$ are related in the following way:
\begin{equation}\label{eqn:orbits}
Gi(x)\cap i(\Sigma)=i(Wx)
\end{equation}
or in a more suggestive form by $Gx\cap\Sigma=Wx$ for any $x\in\Sigma$.
Note that the normalizer $N_G(\Sigma)$ of the section $\Sigma$ need not be discrete;  in example (\ref{ex:hyperpolar}.2) the section $\Sigma$ is a maximal torus $T$, and $T=Z_G(T)\subset N_G(\Sigma)$.  

\begin{prop}
The action of the Weyl group on $\Sigma$ is properly discontinous.
\end{prop}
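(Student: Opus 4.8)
The plan is to show that the $W$-action on $\Sigma$ is proper (so that, since $W$ is discrete, it is properly discontinuous). Since $\Sigma$ carries the induced Riemannian metric, $W$ acts by isometries, and a discrete group acting by isometries on a Riemannian manifold acts properly discontinuously precisely when every point has a neighborhood moved off itself by all but finitely many group elements. So I would first reduce to a local statement near an arbitrary point $x \in \Sigma$.

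The key input is the relation $Gx \cap \Sigma = Wx$ from equation (\ref{eqn:orbits}), together with properness of the $G$-action. Here is the heart of the argument. Fix $x \in \Sigma$ and let $K \subset \Sigma$ be a compact neighborhood of $x$. I want to bound the set $\{w \in W \mid wK \cap K \neq \emptyset\}$. If $wK \cap K \neq \emptyset$, pick a representative $g \in N_G(\Sigma)$ of $w$; then $g$ maps some point of $K$ into $K$, so in particular $g$ lies in the set $\{g \in G \mid gK \cap K \neq \emptyset\}$ (viewing $K \subset M$ via $i$), which is compact by properness of the $G$-action since $K$ is compact in $M$. Thus the representatives of such $w$ all lie in the compact set $P := N_G(\Sigma) \cap \{g \mid gi(K) \cap i(K) \neq \emptyset\}$. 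Now the issue is that infinitely many $w$ could still have representatives in $P$ if $Z_G(\Sigma)$ is large (and indeed it can be, as the remark about maximal tori points out). To handle this I would use that $W = N/Z$ acts \emph{effectively} on $\Sigma$, so the image of the compact set $P$ in $W$ is compact \emph{and} $W$ is discrete, hence this image is finite. Therefore only finitely many $w \in W$ satisfy $wK \cap K \neq \emptyset$, which is exactly properness of the $W$-action; combined with discreteness of $W$ we get proper discontinuity.

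I should be slightly careful about one point: $i$ need not be injective at singular points, so "$i(K) \cap i(K)$" and the identification of $K$ with a subset of $M$ must be done at the level of the section map; but since $N_G(\Sigma)$ acts on $\Sigma$ itself (the action of $N$ descends to $\Sigma$ with $i$ being $N$-equivariant), the cleanest route is to run the properness estimate intrinsically: properness of the $G$-action restricted along $i$ still gives compactness of $\{g \in N \mid g K \cap K \neq \emptyset\}$ as a subset of $N$, because this set is contained in $\{(g,y) \in G \times M \mid g\cdot i(y) \in i(K)\}$ projected appropriately, which is compact. Then project to $W$.

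The main obstacle is precisely the non-discreteness of $N_G(\Sigma)$: properness of the $G$-action only directly controls subsets of $G$, and one must pass to the quotient $W = N/Z$ and exploit both that $W$ is discrete and that it acts effectively on $\Sigma$ to conclude that a compact "collision set" in $N$ has finite image in $W$. Everything else — reducing to a neighborhood, using $Gx \cap \Sigma = Wx$, and invoking properness of the ambient action — is routine. Alternatively, if one prefers, one can cite that $W$ acts on $\Sigma$ with the same orbit space as $M/G$ near regular points and that the $W$-action is by isometries of a complete manifold with closed orbits $Wx$, but the properness estimate above is the most self-contained.
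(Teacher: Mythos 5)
Your approach is genuinely different from the paper's, but it has a gap: you assume throughout that $W$ is discrete (both in the opening sentence "so that, since $W$ is discrete, it is properly discontinuous" and again when concluding "the image of the compact set $P$ in $W$ is compact \emph{and} $W$ is discrete, hence this image is finite"). This is precisely the nontrivial content of the proposition. The paper never asserts discreteness of $W$ prior to this point: $W = N_G(\Sigma)/Z_G(\Sigma)$ is a priori just a Lie group, and nothing in the preceding discussion rules out a positive-dimensional identity component $W^0$. Effectiveness of the $W$-action, which you invoke, does not imply discreteness. Your properness estimate — lifting the collision set $\{w \in W : wK \cap K \neq \emptyset\}$ to the compact set $P = \{g \in N : g\,i(K) \cap i(K) \neq \emptyset\}$ via properness of the $G$-action, then projecting to $W$ — is correct and shows that this collision set is \emph{compact} in $W$. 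But a compact subset of a nondiscrete Lie group need not be finite, so without discreteness you only obtain properness, not proper discontinuity.

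To close the gap you need exactly the paper's key observation. The paper argues: $G$ acts properly, so orbits $Gx$ are closed and embedded; since a section meets each orbit orthogonally, $Gx \cap \Sigma$ is a discrete subset of $\Sigma$; by $Gx \cap \Sigma = Wx$ (equation \eqref{eqn:orbits}), the $W$-orbits in $\Sigma$ are discrete; and for isometric actions, discreteness of all orbits is equivalent to proper discontinuity. If you wish to keep your properness route, the discrete-orbit observation is still what forces $W^0$ to act trivially on $\Sigma$ (a connected group with discrete orbits has trivial orbits), whence $W^0 = \{e\}$ by effectiveness and $W$ is discrete; then "proper $+$ discrete $\Rightarrow$ properly discontinuous" finishes. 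So your lift-and-project argument is a valid alternative, but it does not bypass the transversality/discrete-orbit step that the paper uses — it only postpones it.
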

\begin{proof}
Since $G$ acts properly on $M$, the orbits are closed and embedded. In particular,  every $G$-orbit intersects $\Sigma$ discretely. By (\ref{eqn:orbits}) the $W$-orbits must also be discrete. For an isometric action, this is   equivalent to the action being  properly discontinuous, i.e. for any compact subset $K$ of $\Sigma$, the set of $w\in W$ with $w(K)\cap K\neq\emptyset$ is finite.
\end{proof}

We next introduce  the ``blow-up'' of a polar action. 
 This blow-up is different from the inductive normal projectivization of strata beginning with the lowest dimensional (see e.g. \cite{duistermaat}). The blow-up we use here has been introduced in the context of singular Riemannian foliations admitting sections in \cite{boualem} and studied further in \cite{toeben1} (see also \cite{toeben2}, \cite{toeben3}.)

Recall that for   $x \in M$ a regular point,   the orbit $Gx$ has maximal dimension, hence there exists   exactly one section $\Sigma_x$ containing $x$: 
since the intersection of $\Sigma_x$ with $Gx$ is orthogonal and $\Sigma_x$ is totally geodesic,  we have $\Sigma_x=\exp^{\perp}(\nu_x(Gx))$. 

For a singular point $y \in M$,  there is a family of sections running through this point. The blow-up of a singular point $y$   is obtained using the space of all sections    through the singular point.

Let  $G_q(TM)$ denote the Grassmann bundle  of $q$-planes in $TM$.   The fiber  $G_q(T_xM)$ over $x \in M$  is the Grassmann manifold of $q$-planes in $T_xM$. Recall $r$ is the dimension of a regular orbit, so that $G_q(TM) \cong Fr(TM)/(O(r) \times O(q))$ where $Fr(TM) \to M$ denotes the orthogonal frame bundle of $TM$ with the right action by $O(r + q)$.

Given a section $i : \Sigma \to M$, let  $\tau_x \Sigma  = i_*(T_x\Sigma)$ denote  the tangent space $T_x\Sigma$  considered  as a  subspace of $T_xM$ via the map $i_*$ and hence as 
an element of the Grassmannian $G_q(T_x\Sigma)$.

Define the (Grassmann)  {\it blow-up} of $(M,G)$ by
\begin{equation}\label{eq:blowup}
\whM :=\{\tau_x \Sigma = i_*(T_x\Sigma)\mid i:\Sigma\to M\ \mbox{is a section}, x\in\Sigma\}\subset  G_q(TM)
\end{equation}

Let $\whpi: \whM \to M$ be the restriction of the canonical projection $\pi : G_q(TM)\to M$ to $ \whM $; so $\whpi(\tau_x \Sigma))=i(x)$. 
The action of $G$ on $ \whM $ defined by 
$$g_* \tau_x\Sigma =  (g \circ i)_* (T_x \Sigma)$$
 is proper and  the projection $\whpi$ is $G$-equivariant with respect to this action.
The blow-up $ \whM $ can be endowed with a differentiable structure such that its inclusion $\iota: \whM \
\hookrightarrow G_q(TM)$ is an immersion (see section~3.2 of \cite{toeben1}.) 

Note that given a section  $i : \Sigma \to M$ there is a tautological  lift to a map $\tau : \Sigma \to  \whM $, where for $x \in \Sigma$ we set $\tau (x) = \tau_x\Sigma$.

\begin{prop}\label{prop:product}
Let $G$ be a Lie group with a proper polar action on    $M$,  $i:\Sigma\to M$   a section, and 
  $N=N_G(\Sigma)$   the normalizer of  $\Sigma$.
Then there is a $G$-equivariant diffeomorphism $ \whM \cong G\times_N \Sigma$.
\end{prop}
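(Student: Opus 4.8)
The plan is to construct the $G$-equivariant diffeomorphism $\Phi \colon G\times_N \Sigma \to \whM$ explicitly and then argue it is a well-defined bijection that is a local diffeomorphism. First I would define $\widetilde{\Phi} \colon G \times \Sigma \to \whM$ by $\widetilde{\Phi}(g,x) = g_*\tau_x\Sigma = (g\circ i)_*(T_x\Sigma)$, which lands in $\whM$ by the very definition \eqref{eq:blowup} of the blow-up (noting that $g\circ i$ is again a section, as recorded in Definition~\ref{def:polar}). This map is $G$-equivariant for the left $G$-action on $G\times\Sigma$ via the first factor, and it is smooth because the $G$-action on $G_q(TM)$ is smooth and the tautological lift $\tau\colon\Sigma\to\whM$ is smooth. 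I would then check that $\widetilde{\Phi}$ factors through the quotient $G\times_N\Sigma$: if $(g,x)$ and $(g',x')$ satisfy $g' = gn^{-1}$, $x' = nx$ for some $n\in N=N_G(\Sigma)$, then $(g'\circ i)_*(T_{x'}\Sigma) = (g\circ n^{-1}\circ i)_*(T_{nx}\Sigma) = (g\circ i)_*((n^{-1})_* T_{nx}(n\Sigma)) = (g\circ i)_*(T_x\Sigma)$, using that $n$ descends to a diffeomorphism of $\Sigma$ (the action of $N$ on $\Sigma$ noted after the definition of the Weyl group). So $\Phi\colon G\times_N\Sigma\to\whM$ is well-defined, smooth, and $G$-equivariant, and it clearly covers $\whpi$ in the sense that $\whpi\circ\Phi([g,x]) = g\cdot i(x)$, matching the obvious projection $G\times_N\Sigma \to M$.

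Next I would establish that $\Phi$ is surjective and injective. Surjectivity is immediate: an arbitrary element of $\whM$ is of the form $\tau_y\Sigma' = j_*(T_y\Sigma')$ for some section $j\colon\Sigma'\to M$ and $y\in\Sigma'$; since $G$ acts transitively on the set of sections (as established just before Corollary~\ref{cor:transitive}, there is $g\in G$ with $g\Sigma' = i(\Sigma)$, whence $\tau_y\Sigma' = g^{-1}_*\tau_{x}\Sigma$ for the appropriate $x\in\Sigma$ with $i(x) = g\cdot y$, so it lies in the image of $\widetilde\Phi$. For injectivity, suppose $g_*\tau_x\Sigma = g'_*\tau_{x'}\Sigma$. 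Applying $\whpi$ gives $g\cdot i(x) = g'\cdot i(x')$, so $h := (g')^{-1}g$ satisfies $h\cdot i(x) = i(x')$; moreover $h_*\tau_x\Sigma = \tau_{x'}\Sigma$, i.e. $h_*(T_x(i\Sigma)) = T_{x'}(i\Sigma)$ as subspaces of $T_{i(x')}M$. The point is then that $h$ maps the section $i(\Sigma)$ to another section through $i(x')$ tangent to $i(\Sigma)$ there; since sections are totally geodesic (the remark after Definition~\ref{def:polar}) and a totally geodesic submanifold is determined by a point and its tangent space, $h(i\Sigma) = i(\Sigma)$, so $h\in N_G(\Sigma) = N$. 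Then $[g,x] = [g'h, x] = [g', hx] = [g', x']$ in $G\times_N\Sigma$ (using $hx = x'$, which follows from $h\cdot i(x) = i(x')$ and the $N$-equivariance of $i$ since $i$ is reduced/injective on regular points — one handles the general point by a density/continuity argument), giving injectivity.

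Finally I would show $\Phi$ is a local diffeomorphism, hence a diffeomorphism. Both $G\times_N\Sigma$ and $\whM$ have dimension $\dim G - \dim N + q$: for $\whM$ this is because $\whM$ fibers over $M$ in a way that over regular points is a single-valued section (one section through each regular point, as recalled before \eqref{eq:blowup}) and over the open dense regular set $\whpi$ is therefore a diffeomorphism onto $R$, while $G\times_N\Sigma$ restricted over $R\cap i(\Sigma)$ is likewise identified with $R$; alternatively one checks $\dim\whM = \dim M$ directly from the differentiable structure on $\whM$ described in section~3.2 of \cite{toeben1}. With equal dimensions, it suffices to show $d\Phi$ is injective at each point, or equivalently that $\Phi$ is an immersion; this can be verified by a direct computation of the differential at $[e,x]$, splitting $T_{[e,x]}(G\times_N\Sigma)$ into the $\Sigma$-direction (on which $d\Phi$ is essentially $d\tau$, injective since $\tau$ is an embedding of $\Sigma$ onto its image transverse to the fibers) and the $\mathfrak{g}/\mathfrak{n}$-direction (on which $d\Phi$ records how tilting by one-parameter subgroups of $G$ moves the plane $\tau_x\Sigma$ inside $G_q(TM)$ off of $\whM$ in the fiber direction, which is injective precisely because elements fixing the plane lie in $N$ — the infinitesimal version of the injectivity argument above). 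I expect this last verification — pinning down the differential and its injectivity at singular points, where the geometry of the family of sections is genuinely more delicate — to be the main obstacle; the cleanest route may be to invoke the already-constructed differentiable structure on $\whM$ from \cite{toeben1}, in which $\whM$ is built precisely as (a neighborhood-wise version of) $G\times_N\Sigma$, reducing the proposition to matching two smooth structures via the bijection $\Phi$, whereupon invariance of domain or the fact that a smooth equivariant bijection between manifolds of the same dimension which is a diffeomorphism on a dense open set is a global diffeomorphism finishes the argument.
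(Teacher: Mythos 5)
Your proof follows essentially the same route as the paper's: the same map $\tilde{\Phi}(g,x) = g_*\tau_x\Sigma$, the same well-definedness check via $N$-equivariance, surjectivity from transitivity of (the isotropy action of) $G$ on sections, and injectivity reduced to showing the relevant group element lies in $N$. You usefully fill in the one step the paper leaves implicit — that $g_*\tau_x\Sigma = \tau_y\Sigma$ forces $g(\Sigma)=\Sigma$ because sections are totally geodesic and hence determined by a point plus tangent plane — and you are right that the smoothness of $\Phi^{-1}$ is best handled by matching against the differentiable structure already built on $\whM$ in \cite{toeben1}, which is what the paper tacitly does.
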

\begin{proof} 
The action of the normalizer group $N$   on $G\times \Sigma$ is defined by  $h \cdot (g,x)=(gh^{-1},hx)$.
The space $G\times_W\Sigma$ is the quotient by this action, which is just the quotient space defined by the equivalence relation  $(gh,x) \sim (g,hx)$ for $h \in N$.  

Consider the map $\tilde{\Phi}(g,x) = g_*\tau_x\Sigma$. Then 
$$\tilde{\Phi}(gh,x) = (gh)_*\tau_x\Sigma = g_*(h_*\tau_x\Sigma) = g_* \tau_{hx}\Sigma = \tilde{\Phi}(g,hx)$$

where we use that $h\Sigma = \Sigma$ as $h \in N_G(\Sigma)$. Thus, there is a well-defined map  
\begin{eqnarray}
\Phi \colon G\times_N \Sigma&\to & \whM  \label{eq:weyliso}\\
\Phi(\mbox{[}(g,x)\mbox{]})& = & g_* \tau_x\Sigma \nonumber
\end{eqnarray}

The Lie group $G$ acts on $G\times_N \Sigma$ by $g\cdot [(h,x)]=[(gh,x)]$. 

First, let us show that   $\Phi$ is onto. Every orbit of $G$ in $M$ intersects the image of $\Sigma$, so it suffices to show that given $x \in \Sigma$, the action of the isotropy group $G_x$  is transitive on sections through $x$. This follows from Corollary~\ref{cor:transitive}.

We claim that $\Phi$ is injective. Assume $[(g,x)]$ and $[(h,y)]$ have the same image, where $x,y \in \Sigma$. Without loss we can assume that $h=e$. 
Then $g_* \tau_x\Sigma=\tau_y\Sigma$ which implies $gx = y$ and $g(\Sigma)=\Sigma$, i.e. $g\in N$.
Therefore $[(g,x)]=[(e,y)]$.
\end{proof}

\begin{ex}\label{ex:better} 
 Let $G$ be a connected Lie group and $K \subset G$ a   compact subgroup. Assume the center $Z(G)$ is not empty, and $\Gamma \subset Z(G)$ is a finite subgroup which acts effectively,  isometrically on a compact manifold $N$. Now assume that the quotient space    $M = G/K \times_{\Gamma} N$ is a manifold, where for each $h \in \Gamma$ and $y \in N$, we identify $(gK h, y) \sim (gK, hy)$. Let $[(gH,y)] \in M$ denote the equivalence class of $(gK, y)$. 
 Then  $i : N \to M$, $i(y) = [(eK, y)]$, is a section and 
 $N_G(\Sigma) = K\Gamma$, $Z_G(\Sigma) = K$, so $W = \Gamma/(\Gamma \cap K)$. The diffeomorphism  of 
 Proposition~\ref{prop:product} is  the tautology 
 $G \times_{K\Gamma} N \cong G/K \times_{\Gamma} N$.

This class of examples  are standard models in the theory of compact Hausdorff foliations \cite{Millett1974}, where 
the quotient manifold $B  = G\backslash M \cong W\backslash N$  is    an orbifold. Conversely, given an orbifold $B$ of dimension $q$, there is an associated manifold $M$ with a locally-free action of $O(q)$, so that $B \cong O(q) \backslash M$. However, for such a group action, there need not exists a section $\Sigma$.

  \end{ex}
 
 \begin{ex}\label{ex:torus}  (Polar coordinates on ${\mathbb R}^3$) 
 Let $M = {\mathbb S}^2 \subset {\mathbb R}^3$ be the unit sphere, and let $G = SO(2)$ act via rotations in the plane $(x,y,0)$. Let $\Sigma$ be the embedding $i \colon {\mathbb S}^1 \to {\mathbb S}^2$ given by $i(w) = (\sin(\theta), 0, \cos(\theta))$ for $w = (\sin(\theta), \cos(\theta) \in  {\mathbb S}^1$. Then $N_G(\Sigma) = \{\pm 1\} \subset SO(2)$, $Z_G(\Sigma) = \{1\}$, $W = {\mathbb Z}/2{\mathbb Z}$ and 
 Proposition~\ref{prop:product} yields 
  $$ G \times \Sigma =  {\mathbb S}^1 \times  {\mathbb S}^1  \longrightarrow  \whM =  {\mathbb S}^1 \times_{{\mathbb Z}/2{\mathbb Z}} {\mathbb S}^1 \longrightarrow  M =  {\mathbb S}^2$$ 
  which is just  the standard blow-down map of the 2-torus to obtain the 2-sphere.
\end{ex}

\section{Category for  polar actions}\label{sec:upper}
We can now give an upper estimate for the equivariant category of a proper polar action in terms of the proper action of the  Weyl group  on a section $\Sigma$. The category of the   Weyl group action is  often easier to compute, as the action is discrete.

\begin{thm}\label{thm:main}
Let $G$ be a Lie group with a proper polar action on    $M$,  $i:\Sigma\to M$   a section, and 
  $W=N_G(\Sigma)/Z_G(\Sigma)$   the generalized Weyl group acting on $\Sigma$. Then 
\begin{equation}\label{eq:weylcat}
\cat_G(M)\leq\cat_W(\Sigma)
\end{equation}
\end{thm}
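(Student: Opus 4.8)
The plan is to transfer a $W$-categorical cover of $\Sigma$ to a $G$-categorical cover of $M$ by pushing it through the blow-up $\whM \cong G \times_N \Sigma$ and then down along $\whpi \colon \whM \to M$. The key structural fact is Proposition~\ref{prop:product}: the tautological lift $\tau \colon \Sigma \to \whM$ identifies $\Sigma$ with $\{e\} \times_N \Sigma \subset G \times_N \Sigma$, and under this identification the $W = N/Z$-action on $\Sigma$ is exactly the restriction of the $N$-action, while $\whpi \circ \tau = i$. So a subset $V \subset \Sigma$ determines the $G$-invariant subset $\widehat{V} := G \cdot \tau(V) \subset \whM$, and I would set $U := \whpi(\widehat{V}) \subset M$; since $i$ meets every orbit and $\whpi$ is $G$-equivariant and surjective, these $U$'s will cover $M$ once the $V$'s cover $\Sigma$.

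First I would check that if $V \subset \Sigma$ is $W$-invariant then $\widehat{V}$ is well-defined and $G$-invariant, and that $\whpi(\widehat V)$ is open in $M$ when $V$ is open in $\Sigma$ — this needs that $\whpi$ is an open map (or at least that the $G$-saturation of $\tau(V)$ projects to an open set), which should follow from $\whpi$ being a $G$-equivariant submersion-like map off the singular locus together with properness; I expect this point to require a little care but not to be deep. Second, and this is the heart of the argument, I would take a $W$-categorical homotopy $h \colon V \times [0,1] \to \Sigma$ with $h_1(V) = Wz$ for some $z \in \Sigma$, lift it first to a $G$-equivariant homotopy $\widehat{H} \colon \widehat{V} \times [0,1] \to \whM$ by the formula $\widehat{H}_t(g \cdot \tau(v)) = g \cdot \tau(h_t(v))$ — well-defined because $h$ is $W$-equivariant and $\tau$ intertwines the $W$- and $N$-actions — and then push it down: $H_t := \whpi \circ \widehat{H}_t \circ s$, where $s$ is a section of $\whpi$ over $U$. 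The output satisfies $H_1(U) \subset \whpi(G \cdot \tau(Wz)) = G \cdot i(z)$, a single $G$-orbit, so $U$ is $G$-categorical, and $\cat_G(M) \leq \cat_W(\Sigma)$ follows.

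The main obstacle is the last step: $\whpi$ is not injective over the singular set, so there is no global section $s$ and the "push-down" homotopy $H_t$ is not literally well-defined on $M$ — a point of $M$ may have several preimages in $\widehat V$ that flow to different orbits. I would handle this the way $\whM$ is meant to be used: rather than pushing a homotopy down pointwise, observe that $\whpi$ restricted to $\widehat V$ is a proper map onto $U$, and use the Equivariant Borsuk Theorem together with the fact that $\whpi$ collapses $\widehat V$ onto $U$ fiberwise to produce, from the $G$-deformation of $\widehat V$ onto a single orbit $G \cdot \tau(z)$ in $\whM$, a $G$-deformation of $U$ onto $\whpi(G \cdot \tau(z)) = G\cdot i(z)$ in $M$. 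Concretely: $\whpi$ maps the categorical homotopy of $\widehat V$ to a homotopy of sets $\whpi(\widehat{H}_t(\widehat V))$ sweeping from $U$ to $G\cdot i(z)$, and because the fibers of $\whpi$ over a fixed orbit type are contractible (they are spaces of sections through a point, on which $G_x$ acts transitively by Corollary~\ref{cor:transitive}, hence homogeneous and — being compact homogeneous — amenable to a further contraction) one can lift this back to an honest homotopy on $M$. Making this fiber-collapsing argument precise, i.e. showing $\whpi$ behaves enough like a homotopy equivalence on the relevant invariant neighborhoods to transport categorical covers, is the step I expect to occupy most of the proof.
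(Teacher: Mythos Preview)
Your overall architecture matches the paper's: lift a $W$-categorical cover of $\Sigma$ to $\whM \cong G \times_N \Sigma$ via $\widehat{H}_t([g,v]) = [g, h_t(v)]$, then push down along $\whpi$. The gap is in the push-down. Your proposed remedy --- collapse the fibers of $\whpi$ because they are ``amenable to a further contraction'' --- fails outright: the fiber of $\whpi$ over a singular point $y$ is the space of sections through $y$, i.e.\ the homogeneous space $G_y/N_{G_y}(\Sigma)$, and this is generally \emph{not} contractible. Already in Example~\ref{ex:torus} ($SO(2)$ on $S^2$) the fiber over a pole is $SO(2)/\{\pm 1\}\cong S^1$; globally $\whM$ and $M=S^2$ do not even have the same fundamental group, so $\whpi$ is far from a homotopy equivalence and no Borsuk-type fiber-collapsing argument will repair this.

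What the paper does instead is show that $\whpi\circ\widehat{H}_t$ is already constant on $\whpi$-fibers, so it descends \emph{on the nose} to a $G$-homotopy $H'_t$ on $V_i\subset M$. If $\sigma,\sigma'\in\whV_i$ satisfy $\whpi(\sigma)=\whpi(\sigma')=gx$, then by Corollary~\ref{cor:transitive} there is $h\in G_{gx}$ with $\sigma'=h_*\sigma$, and one computes $\whpi(\widehat{H}_t(\sigma'))=hgx_t$ versus $\whpi(\widehat{H}_t(\sigma))=gx_t$; hence descent holds precisely when $G_x\subset G_{x_t}$. This is the key lemma your proposal is missing (Proposition~\ref{prop:fixset}): under a $W$-equivariant homotopy on $\Sigma$, the \emph{full} $G$-isotropy is non-decreasing. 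It is not automatic --- $W$-equivariance gives only $W_x\subset W_{x_t}$, and $W$ is a subquotient of $G$, not a subgroup --- and its proof, establishing $(\Fix W_x)_x\subset\Fix G_x\cap\Sigma$, uses that the slice representation is an $s$-representation, Kostant's convexity theorem, and an isomorphism $N_G(\Sigma)_x/N_{G_x^0}(\Sigma)\cong G_x/G_x^0$. This is where the polar hypothesis does real work beyond furnishing the blow-up, and it is the idea you need.
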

\begin{proof}
Let $\mathcal{U}=\{U_i\}_{i\in I}$ be a $W$-categorical covering of $\Sigma$,  with $W$-equivariant homotopies $H_i:U_i\times [0,1]\to \Sigma$. Let $V_i=G\cdot U_i$ be the orbit saturation of $U_i$ for the $G$-action on $M$. Then $\whV_i:=\whpi^{-1}(V_i)\subset  \whM $ is the orbit saturation of the $G$-action on the blow-up $ \whM $. 
The strategy of the    proof is to show that the   sets $\whV_i$ are $G$-categorical in $ \whM $, and hence the $V_i$ form  a $G$-categorical cover for $M$.

Note that $U_i$ is $W$-invariant by assumption, so  invariant under the induced action of   $N=N_G(\Sigma)$. Using the identification $ \whM \cong G\times_N \Sigma$, 
each  $\whV_i$  thus  has the alternate description
\begin{equation}
\whV_i=\bigcup_{[g]\in G/N}g_*\tau (U_i),
\end{equation}
Note that if  $g_*\tau (U_i) \cap h_*\tau(U_i) \ne \emptyset$ then $h^{-1}g \in N$, or $g = hk$ for some $k \in N$, as $U_i \subset \Sigma$ always contains regular points for the $G$-action. Thus the above union is disjoint.

Define $(\whH_i)_t|g_*\tau (U_i) = (g_*\circ \tau) \circ H_t\circ (g_*\circ \tau)^{-1}$. We claim this yields a well-defined map on $\whV_i$. Recall that the homotopy $H_i$ is assumed to be $W$-equivariant, and that $g_* \circ \tau = \tau \circ g$ for all $g \in G$. 

Let   $y, y' \in   U_i$ with $g_*\tau (y) = h_* \tau (y')$ so that  $g = hk$ for some $k \in N$. Then     
$y = g^{-1}h y' = k^{-1}y'$ and we calculate
\begin{eqnarray*}
(\whH_i)_t(g_* \tau(y)) & = & (g_*\circ \tau) \circ H_t\circ (g_*\circ \tau)^{-1}(g_*\tau(y)) \\
& = &  (g_*  \circ \tau) \circ H_t(y)  \\
& = &  (g_*\circ \tau) \circ H_t (k^{-1} y')  \\
 & = &  (g_*\circ \tau) \circ k^{-1}   H_t  (y')  \\
 & = &  (g_*k^{-1}_*\circ \tau) \circ    H_t  (y')  \\
 & = &  (h_*\circ \tau) \circ    H_t  \circ (h_*\circ \tau)^{-1}(h_*\tau(y')) \\
 & = &  (\whH_i)_t(h_*\tau(y')) 
\end{eqnarray*}

Thus, $(\whH_i)_t : \whV_i \to  \whM $ is well-defined, and $G$-equivariant by construction.

We next show that each $\whV_i$ is $G$-categorical in $ \whM $;  that is, the image 
  $(\whH_i)_1(\whV_i)$ is contained in a $G$-orbit in $ \whM $. By the definitions, the following diagram commutes
$$
\begin{array}{rcccl}
&g_*\tau U_i&\stackrel{(\whH_i)_t}{\longrightarrow}&g_*\tau \Sigma&\\
g_*\circ\tau&\uparrow&&\uparrow&g_*\circ\tau\\
&U_i&\stackrel{(H_i)_t}{\longrightarrow}&\Sigma&
\end{array}
$$
For $t =1$,  the map $(H_i)_1 : U_i \to W y_i$ for some $y_i \in \Sigma$. Thus, the image  
$(\whH_i)_1(\whV_i)$ is contained in the union of the images of the sets 
$g_* \circ \tau(W y_i)$ for $g \in G$, which are all contained in the $G$-orbit 
$G \cdot \tau(y_i)$.  We have thus  shown that $\cat_G( \whM )\leq \cat_W(\Sigma)$.

Now we want to define a homotopy $H'_i:V_i\times [0,1]\to M$ satisfying 
\begin{equation}
\whpi\circ (\whH_i)_t=(H_i')_t\circ\whpi. 
\end{equation}
The above formula determines $H'_i$ if it is well-defined, because $\whpi:\whV_i\to V_i$ is surjective. 
Also the $G$-equivariance of $(H_i')_t$  follows immediately using $g\circ\whpi=\whpi\circ g_*$. 

For existence, it suffices to show that $(\whH_i)_t$ respects $\whpi$-fibers; that is, 
\begin{equation}
\sigma ,  \sigma' \in \whV_i ~, ~ \whpi(\sigma) = \whpi(\sigma') ~ \Longrightarrow ~ \whpi\circ (\whH_i)_t(\sigma) = \whpi\circ (\whH_i)_t(\sigma').
\end{equation}
Let  $\sigma , \sigma' \in \whV_i$   satisfy  $\whpi(\sigma) = \whpi(\sigma')$. 
Then $\sigma = g_*\tau_x\Sigma$ for some  $g\in G$ and $x\in U_i$. Moreover, 
$\whpi(\sigma) = gx$ and so $\sigma'$ is a section through $gx$.
By Corollary~\ref{cor:transitive} $G_{\whpi(\sigma)}$ acts transitively on the set of sections through $\whpi(\sigma)$ which is $\whpi^{-1}(\whpi(\sigma))$, so $\whpi^{-1}(\whpi(\sigma))=(G_{\whpi(\sigma)})_*\sigma $. Hence there exists $h \in G_{gx}$ such that 
$$\sigma' = h_* \sigma = h_*  g_*\tau_x\Sigma =  (hg)_*\tau_x\Sigma. $$

Set $x_t = (H_i)_t(x)$. We show in Proposition~\ref{prop:fixset} below that $G_x \subset G_{x_t}$ for all $0 \leq t \leq 1$, hence
$$G_{gx}=gG_xg^{-1}\subset gG_{x_t}g^{-1}=G_{gx_t}$$
and so 
$$\whpi\circ (\whH_i)_t(\sigma') =  \whpi\circ (\whH_i)_t((hg)_* \tau_x \Sigma) = hg x_t = g x_t = \whpi\circ (\whH_i)_t(\sigma)$$
as was to be shown. 
\end{proof}

It remains to show that the isotropy groups are stable under the homotopies $(H_i)_t$.
We use the same notation as above.

\begin{prop}\label{prop:fixset}
Given $U \subset \Sigma$, 
let $H:U\times [0,1]\to \Sigma$ be a $W$-equivariant homotopy. For $x\in U$ we write $x_t =H(t,x)$. Then $G_{x}\subset G_{x_t}$ for any $t\in [0,1]$.
\end{prop}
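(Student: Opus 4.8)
The plan is to understand precisely how the isotropy group $G_x$ of a point $x\in\Sigma$ relates to the section $\Sigma$, and then to show that membership in $G_x$ forces fixing the entire $W$-homotopy track $x_t$. The key structural fact is that if $g\in G_x$, then $g$ fixes $x\in\Sigma$; by Corollary~\ref{cor:transitive} the isotropy group $G_x$ acts transitively on the sections through $x$, but more importantly, a section through $x$ is determined by its tangent space $\tau_x\Sigma\subset T_xM$, which is (up to the slice picture) a maximal flat for the slice representation. I would first establish the following local claim: for $g\in G_x$, the induced isometry $g_*$ on $T_xM$ either preserves $\tau_x\Sigma$ or moves it to the tangent space of another section through $x$. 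The subgroup $N_{G_x}(\Sigma)$ of $G_x$ preserving $\Sigma$ is exactly $G_x\cap N_G(\Sigma)$, and this is where the $W$-equivariance of $H$ will be used.

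The main step is then: given $g\in G_x$, I want to produce, for each $t$, an element of $G$ (or directly verify) that fixes $x_t$. The natural idea is to pick $g\in G_x$ and consider its action on the section. If $g$ already lies in $N_G(\Sigma)$, then $g$ descends to an element $w=[g]\in W$ with $wx=x$ (since $gx=x$), so $w\in W_x$; by the $W$-equivariance of $H$ we get $wx_t = w H_t(x) = H_t(wx) = H_t(x) = x_t$, hence $g\in G_{x_t}$ (using that $g$ acts on $\Sigma$ as $w$, so $gx_t=wx_t=x_t$, and $g$ fixing $x_t\in M$ follows since $x_t\in i(\Sigma)$). The remaining case is $g\in G_x\setminus N_G(\Sigma)$: here $g(\Sigma)$ is a different section $\Sigma'$ through $x$, and by Corollary~\ref{cor:transitive} there is $h\in G_x$ with $h(\Sigma')=\Sigma$, i.e. $hg\in N_G(\Sigma)\cap G_x$; but I don't directly get information about $x_t$ from this. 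Instead I would argue that $G_x$ fixes $x_t$ because $x_t$ stays inside the fixed-point-related stratum: by Lemma~\ref{equ1} applied to the $G$-path obtained from $H$ (note $H$ is in particular, after composing with $i$, a $G$-path on $Gx$ via the correspondence after Definition of $G$-path), we would get $G_x\subset G_{x_t}$ — but wait, that is circular; $H$ is only a $W$-homotopy on $\Sigma$, not a priori a $G$-homotopy on $M$.

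So the cleanest route, and the one I expect the authors take, is: compose $i\circ H$ and observe it gives an $N$-equivariant homotopy (since $H$ is $W=N/Z$-equivariant and $Z$ acts trivially on $i(\Sigma)$), but to get a $G$-path on the orbit $Gx$ one uses that the path $t\mapsto i(x_t)$ in $M$, together with the $G$-action, sweeps out a $G$-equivariant map $G x \times [0,1]\to M$ provided $i(x_t)$ depends $G_x$-equivariantly on the base point — which is exactly what we are trying to prove. To break the circularity I would instead argue directly with fixed point sets: $x\in\Sigma$ means $\tau_x\Sigma$ is a section of the slice representation of $G_x$ on $\nu_x(Gx)$ (Slice Theorem for polar actions), and $x_t$ stays in $\Sigma$, so $x_t$ corresponds to a point in $T_x\Sigma$; the $W_x$-equivariance (where $W_x$ is the image of $G_x\cap N$ in $W$) gives that the slice-Weyl group fixes the track, and the generalized Weyl group of the slice representation of $G_x$ is all of $W$ acting on $T_x\Sigma$ near $x$ — actually the relevant fact is that $N_{G_x}(\Sigma)$ surjects onto $W_x$ and the elements of $G_x$ not normalizing $\Sigma$ still fix $x_t$ because $x_t$ lies on the geodesic/track inside $\Sigma$ and any $g\in G_x$ fixing $x$ and mapping $\Sigma$ to $\Sigma'$ also fixes the corresponding track in $\Sigma'$ — and Corollary~\ref{cor:transitive} identifies tracks in $\Sigma$ and $\Sigma'$ compatibly.

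The hard part will be precisely this last point: controlling $G_x\setminus N_G(\Sigma)$, i.e. showing that an isotropy element which moves the section to another section through $x$ nonetheless fixes the homotopy track $x_t$. I expect the resolution to be that one does not need to: one shows $G_x\subset N_G(\Sigma)$ is false in general, but that every $g\in G_x$ satisfies $gx_t=x_t$ because the homotopy $H$ can be chosen (or is automatically) compatible across all sections through $x$ via the $G_x$-transitivity of Corollary~\ref{cor:transitive}, OR — more likely — one reduces immediately to the infinitesimal statement: $x_t$ lies in $\Sigma$, $\tau_x\Sigma$ is fixed by the subgroup $N_{G_x}(\Sigma)\subset G_x$, and since the slice representation is polar with section $T_x\Sigma$, the whole $G_x$-orbit picture forces $G_{x_t}\supset G_x$ for $x_t$ near $x$, extended to all $t$ by connectedness of $[0,1]$ and lower semicontinuity of orbit type (Lemma~\ref{equ1} and the Tubular Neighborhood Theorem). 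I would write the proof by first handling $g\in G_x\cap N$ via $W$-equivariance as above, then handling general $g\in G_x$ by the transitivity Corollary~\ref{cor:transitive} to conjugate into the first case while preserving $x_t$.
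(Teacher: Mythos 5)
Your proof handles $g\in G_x\cap N_G(\Sigma)$ correctly: such a $g$ descends to $w\in W_x$, and $W$-equivariance of $H$ gives $wx_t=x_t$, hence $gx_t=x_t$. That is indeed the easy part. But your proposed reduction of the general case $g\in G_x\setminus N_G(\Sigma)$ to this one is circular and does not close. Suppose you pick $h\in G_x$ with $hg\in N_G(\Sigma)\cap G_x$ (which exists by Corollary~\ref{cor:transitive}). Then the first case gives $hgx_t=x_t$, so $gx_t=h^{-1}x_t$. To conclude $gx_t=x_t$ you would need $h^{-1}x_t=x_t$; but $h^{-1}\in G_x$ does not normalize $\Sigma$ either (it maps $\Sigma$ to $\Sigma'$), so you are back exactly where you started. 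You acknowledge the difficulty several times in your write-up, float several candidate resolutions, and never land on one that works — the final ``conjugate into the first case while preserving $x_t$'' has no content because you have not shown that conjugation by $h$ preserves $x_t$.

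What the paper actually does is rather different and subtler. The key observation is that one should separate $G_x$ into its identity component $G_x^0$ and a set of coset representatives that can be taken inside $N_G(\Sigma)_x$. First, the paper proves the isomorphism $N_G(\Sigma)_x/N_{G_x^0}(\Sigma)\cong G_x/G_x^0$ (the surjectivity uses transitivity of $G_x^0$ on sections through $x$), so one may write $G_x=\bigcup_i n_i G_x^0$ with $n_i\in N_G(\Sigma)_x$. The $n_i$ are handled by your first case. The substantive step is showing that $G_x^0$ fixes $x_t$; here the paper passes to the slice representation of $G_x^0$ on $V=\nu_x(Gx)$ (polar with section $T_x\Sigma$ and Weyl group $W^0$), and proves $\Fix(W^0)=\Fix(dG_x^0|V)\cap T_x\Sigma$. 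This uses Kostant's convexity theorem twice: once to see that the orthogonal complement of $\Fix(W^0)$ in $T_x\Sigma$, together with $\ker\pi$, is $dG_x^0$-invariant, and once more to see that $dG_x^0$ acts trivially on $\Fix(W^0)$. You gesture toward ``the slice-Weyl group'' and ``the slice representation is polar'' but never invoke the convexity theorem, which is the engine that makes the argument go. Without it, there is no reason the connected component $G_x^0$ should fix the track $x_t$: $G_x^0$ is in general positive-dimensional, intersects $N_G(\Sigma)$ only in a proper subgroup, and its relationship to the section is exactly what the Kostant argument controls. So the gap is concrete: the proof of the claim ``$G_x^0$ fixes $(\Fix W_x)_x$ pointwise'' is missing, and no version of your conjugation trick supplies it.
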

\begin{proof} 
Clearly $W_{x}\subset W_{x_t}$. This means $x_t\in(\Fix W_x)_x$, which is the connected component of $\Fix W_x$ containing $x$. 

Recall that the action of $W = N_G(\Sigma)/Z_G(\Sigma)$ on $\Sigma$ is the quotient of the action of $N_G(\Sigma) \subset G$ on $\Sigma \subset M$. We are going to show 
$$
(\Fix W_x)_x\subset(\Fix G_x)_x\cap \Sigma.
$$ 
This implies $x_t\in \Fix G_x$, so $G_x\subset G_{x_t}$.

We first want to show $\Fix (dW_x|T_x\Sigma)=\Fix (dG_x|V)\cap T_x\Sigma$, where $V=\nu_x(Gx)$. We remark that the right hand side is equal to $\Fix (dG_x|V)$ as $\Fix dG_x|V\subset T_x\Sigma$; this follows from the fact that $G_x$ and its identity component $G_x^0$ act transitively on the set of sections through $x$. The linear action of $G_x^0$ on $\nu_x (Gx)$ is hyperpolar (and therefore orbit-equivalent to an s-representation) for which $T_x\Sigma$ is a section. Let $W^0$ be the associated Weyl group acting on $T_x\Sigma$; it is generated by the reflections through the singular hyperplanes in $T_x\Sigma$ through the origin. It is known that $W^0=N_{G_x^0}(\Sigma)/Z_{G_x^0}(\Sigma)$. Let $F_1=\Fix (W^0)$ and $F_2$ be its orthogonal complement in $T_x\Sigma$. This decomposes $T_x\Sigma$ into two $W^0$-invariant subspaces. Let $\pi:V\to T_x\Sigma$ be the orthogonal projection and let $D=\ker\pi$. As the image of an orbit $dG_x^0v, v\in V$ under $\pi$ is the convex hull of $W^0\pi(v)$ by Kostant's convexity theorem (\cite{kostant}, see also \cite{palaisterng} Theorem 8.6.2 and 8.6.4), $F_2\oplus D$ is a $dG_x^0|V$-invariant subspace and therefore also $F_1$. The action of $dG_x^0$ on $F_1$ is trivial (for $v\in F_1$, the orbit $dG_x^0v$ lies on the sphere of radius $\|v\|$ and on the other hand on $\pi^{-1}(v)$, since again by convexity $\pi(dG_x^0v)=W^0v=\{v\}$; the intersection of both submanifolds in $V$ is exactly $\{v\}$), so $F_1\subset \Fix (dG_x^0|V)\cap T_x\Sigma$. The converse is obviously true. 
Thus $\Fix (W^0)=\Fix (dG_x^0|V)\cap T_x\Sigma$.\par 
Now we want to show $(\Fix W_x)_x=(\Fix G_x)_x\cap \Sigma$. The natural inclusion
$$
\phi:N_{G}(\Sigma)_x/N_{G_x^0}(\Sigma)\to G_x/G_x^0
$$
is an isomorphism. We prove surjectivity. Let $[g]\in G_x/G_x^0$. Since $G_x^0$ acts transitively on the set of sections through $x$ there is an $h \in G_x^0$ with $(gh)_*(T_x\Sigma)=T_x\Sigma$. Thus $gh\in N_{G}(\Sigma)_x$ and this proves surjectivity. We now prove injectivity. Assume $\phi([n_1])=\phi([n_2])$ for $n_i\in N_{G}(\Sigma)_x$. Thus $n_1h=n_2$ for some $h\in G_x^0$. It follows $h_*T_x\Sigma=T_x\Sigma$. Therefore $h\in N_{G_x^0}(\Sigma)$ and we have proven injectivity.\par
Thus $G_x/G_x^0=\{n_iG_x^0\}_{i\in I}$ for a countable index set $I$ and $n_i\in N_{G}(\Sigma)_x$. Together with $\Fix (W^0)=\Fix (dG_x^0|V)$ this implies $\Fix (dW_x|T_x\Sigma)=\Fix (dG_x|V)\cap T_x\Sigma=\Fix (dG_x|T_xM)\cap T_x\Sigma$ and therefore by exponentiating we obtain $(\Fix W_x)_x=(\Fix G_x)_x\cap\Sigma$. 
\end{proof}

\section{Examples and applications}\label{sec:examples}

We provide a selection of examples of homogeneous  polar actions which show that the upper and lower bounds for the $G$-category provided in   Theorem~\ref{thm:main}, and Theorem~\ref{thm:lowerbound} and Corollary~\ref{cor:hierlowerbound}, can be very effective. In particular, we deduce the calculation of the the classical Lusternik-Schnirelmann category for $U(n)$ and $SU(n)$
 due to Singhof \cite{singhof} mentioned in the Introduction.

\subsection{The LS-category and the equivariant category of $SU(n)$ and $U(n)$}\label{ex:Un}
We will first prove 
$$
\cat_{SU(n+1)}SU(n+1)=n+1
$$ 
for the equivariant category of the action of $G=SU(n)$ on itself by conjugation. We have already seen in example \ref{ex:SU} that $n+1$ is a lower bound.\par
We now want show with the help of Theorem \ref{thm:main} that $n+1$ is also an upper bound.
The maximal torus of this action is
$$
\mT^n=\{\lambda_1\oplus\cdots\oplus\lambda_{n+1}\mid \lambda_i\in S^1\subset\mC,\lambda_1\cdots\lambda_{n+1}=1\}.
$$
The Weyl group $W_{SU(n+1)}$ is the group of permutations of the coordinates of $\mT^n$
$$
\sigma:\lambda_1\oplus\cdots\oplus\lambda_{n+1}\mapsto \lambda_{\sigma(1)}\oplus\cdots\oplus\lambda_{\sigma(n+1)}.
$$
Let $z_k=e^{\frac{2\pi ik}{n+1}}I_{n+1}, k=0,\ldots,n$ be the set of central elements of $SU(n+1)$. We give   a $W$-categorical covering $U_k$ of $\Sigma$ so that each $U_k$ contracts radially to $z_k$.

Let $\varphi:\mR^{n+1}\to \mT^{n+1}, x=(x_0,\ldots,x_n)\mapsto (e^{2\pi i x_0},\ldots,e^{2\pi i x_n})$ be the canonical covering map. The preimage of $\mT^n \subset \mT^{n+1}$ under $\varphi$ intersected with the fundamental domain $[0,1]^{n+1}$ is exactly $\{x\in [0,1]^{n+1}\mid \sum_{i=1}^n x_i=k, k\in \{1,2,\ldots,n\}\}$.

Define the $n$-simplex $\Delta_k=\{x\in [0,1]^{n+1}\mid \sum_{i=1}^n x_i=k\}$ for $k=1,\ldots,n$. We observe that $\varphi$ restricted to the interior $\intern(\Delta_k)$ of $\Delta_k$ in $\{x\in \mR^{n+1}\mid \sum_{i=0}^nx_i=k\}$ is a diffeomorphism onto its image in $\mT^n$. 
Clearly $\intern(\Delta_k)$ is invariant under permutation of coordinates and its radial contraction to $(\frac{k}{n},\ldots, \frac{k}{n})$ is equivariant with respect to the permutation group. The conjugation of this homotopy with $\varphi$ is $W$-equivariant and contracts to $z_k$. 
An extension of this homotopy to a neighborhood of $\varphi(\Delta_k)$ would finish the proof but this is not possible.
The injectivity of $\varphi$ on the entire $\Delta_k$ fails exactly in its vertices which are mapped to $z_0$. 

Now let $V_k'$ be a small open neighborhood of $\Delta_k$ in $\{x\in \mR^{n+1}\mid \sum_{i=0}^nx_i=k\}$ invariant under permutation of coordinates minus an $\epsilon/2$ ball around the vertices of $\Delta_k$ for small $\epsilon>0$ such that $V_k'$ is still star-shaped with respect to $(\frac{k}{n},\ldots, \frac{k}{n})$. Then, for appropriate choices, $\varphi$ is injective and therefore an isometry of $V_k'$ to $V_k$. 

The $\{V_k\}, k=1,\ldots,n$ cover $\mT^n$, together with the ball $V_0$ around $z_0$ of radius $\epsilon$. Each $V_k$ is $W$-categorical and radially  contracts to $z_k$ via the $W$-equivariant homotopy $h_k$.

Now the proof of Theorem \ref{thm:main} shows that $h_k$ can be extended to a $SU(n+1)$-homotopy $H_k$ of $SU(n+1)\cdot V_k\subset SU(n+1)$ to $SU(n+1)\cdot z_k=z_k$. This gives us a $SU(n+1)$-categorical covering of $SU(n+1)$ of cardinality $n+1$. Thus $\cat_{SU(n+1)}SU(n+1)=n+1.$

We can now quickly reprove the following theorem by Singhof  \cite{singhof}.
\begin{thm}[Singhof] 
The LS-categories of the unitary and the special unitary groups are $\cat(SU(n))=n$ and $\cat(U(n))=n+1$.
\end{thm}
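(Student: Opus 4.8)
The plan is to derive Singhof's theorem as a corollary of the equivariant computation just completed, using the general inequality $\cat(M/G) \le \cat_G(M)$ together with the fact that for the conjugation action of a compact connected Lie group $G$ on itself, the quotient $G/G$ is the quotient of the maximal torus by the Weyl group, and for the unitary groups this quotient is a space whose classical LS-category is readily identified. First I would treat $SU(n+1)$: we have shown $\cat_{SU(n+1)}(SU(n+1)) = n+1$, so the general lower bound gives $\cat\bigl(SU(n+1)/SU(n+1)\bigr) \le n+1$; but this is the wrong direction, so instead the right route is to bound $\cat(SU(n+1))$ directly. The key observation is that the $W$-categorical covering $\{V_0, V_1, \dots, V_n\}$ of the maximal torus $\mathbb{T}^n$ constructed above, consisting of $n+1$ sets each contracting $W$-equivariantly to a central element, can be pushed forward: since the $V_k$ are $W$-invariant and the quotient map $\mathbb{T}^n \to \mathbb{T}^n/W = SU(n+1)/SU(n+1)$ sends them to open categorical sets, we get $\cat(SU(n+1)/SU(n+1)) \le n+1$ as well, but again this bounds the quotient, not $SU(n+1)$ itself.

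The correct approach is the following. For $\cat(SU(n))$: it is classical (and elementary from the cup-length in $H^*(SU(n);\mathbb{Z})$, which is an exterior algebra on generators in degrees $3, 5, \dots, 2n-1$, giving a nonzero product of $n-1$ elements) that $\cat(SU(n)) \ge n$. For the upper bound $\cat(SU(n)) \le n$, I would invoke the fact that the $SU(n)$-categorical covering of $SU(n)$ by $n$ sets (obtained from the theorem above applied with $n$ in place of $n+1$, i.e. $\cat_{SU(n)}(SU(n)) = n$) consists of sets each of which is $SU(n)$-equivariantly contractible to a single orbit — but a single orbit of the conjugation action through a central element $z_k$ is a point. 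Hence each of these $n$ categorical sets is contractible in $SU(n)$ in the ordinary (non-equivariant) sense, which shows $\cat(SU(n)) \le n$. Combining, $\cat(SU(n)) = n$. For $U(n)$, one argues analogously: the conjugation action of $U(n)$ on itself is polar with section the maximal torus $\mathbb{T}^n = \{\lambda_1 \oplus \cdots \oplus \lambda_n\}$ and Weyl group $S_n$; one constructs a $W$-categorical covering of $\mathbb{T}^n$ of cardinality $n+1$ by simplices $\Delta_k$ ($k=0,\dots,n$) as before — now there are $n+1$ of them because the constraint $\prod \lambda_i = 1$ is dropped, so the relevant simplices in $[0,1]^n$ are indexed by $k = 0, 1, \dots, n$ — each contracting $S_n$-equivariantly to a scalar matrix $e^{2\pi i k/n} I_n$, which is again a single point of $U(n)$. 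By Theorem~\ref{thm:main} these lift to $n+1$ equivariantly categorical sets in $U(n)$, each contractible to a point, hence $\cat(U(n)) \le n+1$; and the cup-length in $H^*(U(n);\mathbb{Z})$, an exterior algebra on $n$ odd-degree generators, gives $\cat(U(n)) \ge n+1$.

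The main obstacle I anticipate is the careful handling of the passage from an \emph{equivariant} categorical covering to an \emph{ordinary} one: it is not true in general that a $G$-categorical set is null-homotopic in $M$, since it need only be deformable onto a single orbit, which may have positive dimension and be topologically nontrivial. The point that makes the argument work here is precisely that the target orbits in the constructed coverings are \emph{central} elements, i.e. zero-dimensional orbits (points), so equivariant contractibility onto such an orbit coincides with genuine contractibility. I would therefore need to verify explicitly that the covering produced by the proof of Theorem~\ref{thm:main}, when fed the simplex-based covering of the torus, has all its pieces contracting to fixed points of the conjugation action, which is immediate since the $z_k$ are central. A secondary, more routine obstacle is pinning down the lower bounds $\cat(SU(n)) \ge n$ and $\cat(U(n)) \ge n+1$; these are standard cup-length estimates using the known rational (indeed integral) cohomology rings of these Lie groups, which are free exterior algebras on $n-1$ and $n$ generators respectively, so $\cat \ge 1 + (\text{number of generators})$.
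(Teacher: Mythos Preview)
Your argument for $SU(n)$ is correct and essentially identical to the paper's: the $G$-categorical covering built from the simplices contracts each piece to a central element, hence to a single point, so the same covering is categorical in the classical sense; the lower bound is the cuplength estimate.

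Your treatment of $U(n)$, however, has a genuine gap. The construction ``by simplices $\Delta_k$ as before, now indexed by $k=0,\dots,n$'' does not carry over. In the $SU(n+1)$ case the simplices $\Delta_k=\{x\in[0,1]^{n+1}\mid\sum x_i=k\}$ are $n$-dimensional and (together with a ball around the identity) cover the $n$-dimensional subtorus $\{\prod\lambda_i=1\}\subset\mathbb{T}^{n+1}$. For $U(n)$ the maximal torus is the \emph{full} torus $\mathbb{T}^n$; the analogous sets $\{x\in[0,1]^n\mid\sum x_i=k\}$ are only $(n-1)$-dimensional and cannot cover it. Moreover, your proposed contraction targets $e^{2\pi ik/n}I_n$ for $k=0,\dots,n$ are not $n+1$ distinct points, since $k=0$ and $k=n$ both give the identity. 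So neither the open sets nor the targets are correctly specified, and it is not clear that $\mathbb{T}^n$ with its $S_n$-action even splits $S_n$-equivariantly in a way that would let you recycle the $SU$ construction.

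The paper sidesteps this entirely: for $U(n)$ it uses the diffeomorphism $U(n)\cong S^1\times SU(n)$ together with the product inequality $\cat(M\times N)+1\leq\cat(M)+\cat(N)$, giving $\cat(U(n))\leq 2+n-1=n+1$; your cuplength lower bound then finishes. If you want a direct polar-action proof for $U(n)$, you would need an honest $S_n$-categorical covering of the full torus $\mathbb{T}^n$ by $n+1$ sets each retracting to a point on the diagonal circle, and that is a different combinatorial problem from the $SU$ case, not a reindexing of it.
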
 
Since the $SU(n+1)$-homotopy $H_i$ from above contracts to a point $z_k=SU(n+1)\cdot z_k$, the open set $SU(n+1)\cdot V_k$ is also categorical in the classical sense of Lusternik and Schnirelmann. Therefore our equivariant cover also provides a LS-categorical cover of $SU(n+1)$. So $\cat(SU(n+1))\leq n+1$. On the other hand $n+1\leq\cat(SU(n+1))$ by the general formula $\mbox{cuplength}(M)+1\leq\cat(M)$ and $\mbox{cuplength}(SU(n+1))=n$. Now $\cat(U(n))=n+1$ follows from $U(n)\cong S^1\times SU(n)$, from the formula $\cat(M\times N)+1\leq\cat(M)+\cat(N)$ and $\mbox{cuplength}(U(n))=n$ .

\subsection{The LS-category $\mCP^n$}
We want to give an alternative computation of the LS-category of $\mCP^n$ with the help of Theorem \ref{thm:main} as in the previous example. Consider the action of $\mT^n=\{c=(c_0,\ldots,c_n)\in \mT^{n+1}\mid c_0\cdots c_n=1\}$ on an element $z=[z_0:\ldots: z_n]\in\mCP^n$ by $c\cdot z:= [c_0z_0:\ldots c_nz_n]$. This action is polar. The natural embedding of $\mRP^n$ into $\mCP^n$ is a section. The elements of the Weyl group $W$ are the even sign changes in the homogeneous coordinates. The fixed points of $W$ are the $n+1$ points $e_i=[0:\ldots:0:1:0:\ldots:0]$, where the $1$ is at position $i$, for $i=0,\ldots,n$. These are also $\mT^n$-fixed points. For each $i$ we want to define a $W$-equivariant homotopy that contracts to $e_i$. 

Define $\varphi_i:\mR^n\to \mRP^n; x=(x_1\ldots x_n) \mapsto [x_1:\ldots:1: \ldots: x_n]$. Let $U_i \subset \mRP^n = \Sigma$ denote the image of $\varphi_i$.
The radial contraction to the origin in $\mR^n$ gives us via $\varphi_i$ a $W$-homotopy $h^i: U_i\times [0,1]\to \mRP^n; ([z],t)\mapsto [t\frac{z_0}{z_i}:\ldots:1:\ldots :t\frac{z_n}{z_i}]$ contracting to $e_i$. The collection $\{U_0, \ldots, U_n\}$ form  a $W$-categorical cover of the section $\mRP^n$. (Of course, the open sets $U_i$  are just the usual covering of $\mRP^n$ by Grassmann cells.)

Now set $V_i = \mT^n\cdot U_i$
By the proof of Theorem \ref{thm:main}, we can extend the $h_i$ to $\mT^n$-homotopies $H_i: V_i \times [0,1]\to \mCP^n$. These contract to the $\mT^n$-fixed points $e_i$. 
Together with the lower bound from the cuplength we now have $\cat(\mCP^n)=n+1$. 

Note that together with the lower bound from Corollary \ref{cor:fix}, the arguments also show $\cat_{\mT^n}(\mCP^n)=n+1$.

 \eject

\subsection{The LS-category $\mHP^n$}
We consider the polar action of $G=Sp(1)\cdots Sp(1)$ ($n$ factors) on $\mHP^n$. It has the same section, namely $\mRP^n$, and Weyl group $W$ as the previous action. The $W$-fixed points are also $G$-fixed points. This gives us as before $\cat(\mHP^n)=n+1$.

\subsection{The LS-category $\mOP^2$}
We consider the polar action of $G=Spin(8)$ on $\mOP^n$ with section $\mOP^2$ with Weyl group as before for $n=2$. Then $\cat(\mOP^2)=3$.

\bigskip

\end{document}